\newcommand{\ls}{\lesssim}
\newcommand{\gs}{\gtrsim}
\newcommand{\R}{\mathbb{R}}
\newcommand{\C}{\mathbb{C}}
\newcommand{\Z}{\mathbb{Z}}
\newcommand{\N}{\mathbb{N}}
\DeclareFontFamily{U}{mathx}{\hyphenchar\font45}
\DeclareFontShape{U}{mathx}{m}{n}{ <5> <6> <7> <8> <9> <10> <10.95>
  <12> <14.4> <17.28> <20.74> <24.88> mathx10 }{}
\DeclareSymbolFont{mathx}{U}{mathx}{m}{n}
\DeclareMathAccent{\widecheck}{0}{mathx}{"71}
\newcommand{\eps}{\epsilon} 
\DeclareMathOperator{\Real}{Re}
\DeclareMathOperator{\Imag}{Im}
\newtheorem{thm}{Theorem} \newtheorem{cor}[thm]{Corollary}
\newtheorem{pro}[thm]{Proposition} \newtheorem{lem}[thm]{Lemma}
\theoremstyle{remark} \newtheorem{rmk}[thm]{Remark}
\theoremstyle{definition} 
\numberwithin{equation}{section} \numberwithin{thm}{section}
\begin{document}
\title[Critical results for fractional Hartree-type
equations]{Critical well-posedness and scattering results for
  fractional Hartree-type equations}

\author[S.~Herr]{Sebastian Herr} \address[S.~Herr]{Fakult\"at f\"ur
  Mathematik, Universit\"at Bielefeld, Postfach 10 01 31, 33501
  Bielefeld, Germany} \email{herr@math.uni-bielefeld.de}

\author[C.~Yang]{Changhun Yang} \address[C.~Yang]{Department of
  Mathematical Sciences, Seoul National University, Seoul 151-747,
  Republic of Korea} \email{maticionych@snu.ac.kr}

\begin{abstract}
  Scattering for the mass-critical fractional Schr\"odinger equation
  with a cubic Hartree-type nonlinearity for initial data in a small ball in the scale-invariant space of three-dimensional radial and square-integrable initial data is established. For this, we prove a  bilinear estimate for
  free solutions and extend it to perturbations of bounded quadratic variation. 
  This result is shown to be sharp by proving the unboundedness of a third order derivative of the flow map in  the super-critical range.
\end{abstract}
\subjclass[2010]{Primary: 35Q55; Secondary: 35Q40} \thanks{The authors
  acknowledge support by the German Science Foundation, IRTG 2235.}
\maketitle

\section{Introduction}
Let $n \in \N$, $1\leq \alpha\leq 2$, and $\sigma \in \R$. We consider
the following initial value problem for a fractional Schr\"odinger
equation with a cubic Hartree-type nonlinearity:
\begin{equation}\label{eqn}
  \begin{split}
    -i\partial_t u+(-\Delta)^{\frac{\alpha}{2}}u&=
    \sigma (|\cdot|^{-\alpha}*|u|^{2})u\\
    u(0,\cdot)&=\varphi
  \end{split}
\end{equation}
Here, the unknown is a function $u:(-T,T)\times \R^n\to \C$, the
initial datum is $\varphi:\R^n \to \C$ and
$(-\Delta)^{\frac{\alpha}{2}}$ is defined as the spatial Fourier
multiplier with symbol $|\cdot|^\alpha$ on $\R^n$, and $*$ denotes
spatial convolution.  We will consider initial data
$\varphi \in H^s(\mathbb{R}^{n})$ and solutions will be continuous
curves in $H^s(\mathbb{R}^{n})$.

We may rescale solutions according to
\begin{equation}\label{scale}
  u(t,x)\rightarrow u_{\lambda}(t,x):=\lambda^{\frac{n}{2}}u(\lambda^\alpha t,\lambda x),
\end{equation}
for fixed $\lambda>0$. The mass
\[M(u(t)):=\|u(t)\|_{L^2(\mathbb{R}^{n})}^2\]
of sufficiently smooth and decaying solutions $u$ of \eqref{eqn} is
conserved and invariant under this rescaling, i.e.\
$M(u_\lambda(t))=M(u(t))=M(\varphi)$ for any $t\in\mathbb{R}$. For
this reason the equation \eqref{eqn} is referred to as being mass-critical.

In addition, for sufficiently smooth and decaying solutions $u$ of \eqref{eqn}, the
energy
\[
E(u(t)):=\frac12\langle(-\Delta)^{\frac{\alpha}{2}}u(t),u(t)\rangle
+\frac{\sigma}{4}\langle (|\cdot|^{-\alpha}*|u(t)|^{2})u(t) ,
u(t)\rangle,\]
is conserved and the Sobolev space
$H^{\frac{\alpha}{2}}(\mathbb{R}^{n})$ serves as the energy space for
equation \eqref{eqn}. Here, $\langle \cdot ,\cdot \rangle$ is the
complex inner product in $L^2(\mathbb{R}^n)$.

The Cauchy problem with Hartree-type nonlinearities has been studied
intensively. If $\alpha=1$ and $n=3$, in which case \eqref{eqn} arises
as a model system for the dynamics of boson stars, Lenzmann and the
first named author proved local well-posedness for radial initial data
in the full subcritical range $s>0$ using $X^{s,b}$-spaces
\cite{HL-14}.
In the present paper the case of generalized dispersion,
i.e.\ $\alpha>1$, will be addressed.
In \cite{KLetal-13},
Kirkpatrick, Lenzmann and Staffilani rigorously derived a
fractional Schr\"odinger equation with a
cubic power-type nonlinearity if $n=1$ as the continuum limit of certain discrete physical
systems with long-range lattice interactions. As an
open problem they suggested that their argument might be generalized to the
fractional Schr\"odinger equation with other nonlinear terms in higher
dimension. Equation \ref{eqn} with $\alpha=2$ with
Hartree-nonlinearity, i.e.\ convolution with $|\cdot|^{-1}$, in $n=3$ was derived from the quantum theory of
large systems of bosons.  With regard to well-posedness of
\eqref{eqn}, Cho, Hajaiej, Hwang, and Ozawa showed global
well-posedness in the critical space for sufficiently small radial
initial data if $\frac{2n}{2n-1}<\alpha<2$ by using radial Strichartz
estimates \cite{CHetal-13}. Our main result fills the gap in the range
$1<\alpha\le\frac{2n}{2n-1}$.

Concerning the the scattering problem associated with \eqref{eqn} in the
case $\alpha=1$, the first named author and Tesfahun \cite{HT-15}
proved scattering of solutions for small radial initial data with
$s>0$ in the case of Yukawa potentials in $n=3$, while in case of the
Coulomb potential a modified scattering result has been established by
Pusateri \cite{P-14}. The classical
case $\alpha=2$ has been treated in \cite{HT-87}. In the fractional case $1<\alpha<2$, Cho, Hwang
and Ozawa \cite{CHetal-16} proved scattering for small initial data
when $s>\frac{2-\alpha}{2}$ in the case of generalized potentials
including the Yukawa potentials in $n\ge3$. 

For a more complete account on previous and related results, we refer to \cite{CHetal-13,CHetal-16}. Concerning more references to the physics literature, we refer to \cite{KLetal-13}.

We address the question of well-posedness and scattering of
\eqref{eqn} in the critical space in the range $1< \alpha\leq 2$.  To
obtain a result in the full range, we apply a contraction argument in a function space whose construction is based on the space of bounded quadratic variation $V^2$ and we extend a bilinear estimate for free
solutions to this space. In part, the strategy of proof is similar to \cite{HT-15}, but here we work in the critical regime.

In the super-critical range, i.e.\ $s<0$, we provide a counterexample
which implies the discontinuity of the flow map.

Our aim is to prove the existence and scattering of solutions to the
IVP \eqref{eqn} in the critical space $L^2(\R^3)$, i.e. we will focus
on $n=3$. We will consider the subspace of radial functions.  For
$s\in \R$ define
\[H^s_{rad}(\R^3):=\{ \varphi \in H^s(\R^3): \exists \,
\varphi_0:[0,\infty)\to \R \text{ s.th. } \varphi(x)=\varphi_0(|x|)
\text{ a.e.}\},\]
with norm $\|\cdot\|_{H^s}$.  We write
$L^2_{rad}(\R^3):=H^0_{rad}(\R^3)$.  Define
$(\mathbf{S}\varphi)(t,x)=(S(t)\varphi)(x)$, for
$\widehat{S(t)\varphi}(\xi)=e^{-it
  |\xi|^\alpha}\widehat{\varphi}(\xi)$.
Let us state our first main result on global well-posedness and
scattering for radial initial data which is small in the critical
space.
\begin{thm}\label{thm:main} Let $1<\alpha\leq 2$.
  There exists $\delta>0$, such that for all
  $\varphi\in L_{rad}^{2}(\R^3)$ satisfying
  $\|\varphi\|_{L^2}\leq \delta$, there exists a global solution
  $u\in C_b(\R,L_{rad}^{2}(\R^3))$ of \eqref{eqn}. $u$ is unique in a
  certain subspace and the flow map $\varphi \to u$ is smooth.

  Moreover, the solution scatters as $t\rightarrow\pm \infty$, i.e.\
  there exist $\varphi_\pm \in L_{rad}^{2}(\R^3)$, such that
  \[
  \|u(t)-S(t)\varphi_\pm\|_{L^2(\R^3)}\to 0 \quad (t\to \pm \infty).
  \]
\end{thm}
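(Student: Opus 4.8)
The plan is to run a standard Picard iteration for the Duhamel formulation in a critical function space built on $U^2$/$V^2$-type spaces adapted to the fractional propagator $S(t)$. Writing the equation in integral form,
\[
u(t) = S(t)\varphi + i\sigma \int_0^t S(t-t') \big( (|\cdot|^{-\alpha}*|u|^2)u \big)(t')\, dt',
\]
I would set up the iteration in a space $X$ of functions $u$ with $S(-t)u \in V^2_{rad}$ (the $V^2$-functions valued in $L^2_{rad}(\R^3)$), whose norm is scale-invariant at the mass-critical level. The first step is to record the linear estimates: $\|S(t)\varphi\|_X \ls \|\varphi\|_{L^2}$, and the standard duality/transfer estimate bounding the Duhamel term in $X$ by the $L^1_t L^2_x$-type norm paired against $V^2$, reducing matters to a multilinear estimate on the nonlinearity.

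The heart of the argument is the trilinear estimate
\[
\Big\| \int_0^t S(t-t')\big( (|\cdot|^{-\alpha}*(u_1\bar u_2)) u_3\big)(t')\,dt' \Big\|_{X} \ls \prod_{j=1}^3 \|u_j\|_X,
\]
which by the transfer principle follows from the corresponding bilinear estimate for free solutions announced in the abstract, together with the atomic structure of $U^2$ and the interpolation-type embedding $U^2 \hookrightarrow V^2$ used in the Koch–Tataru framework. Concretely, I would first prove the bilinear estimate for $\mathbf{S}\psi_1$ and $\mathbf{S}\psi_2$, exploiting radial symmetry in $\R^3$ (which gives the decay needed for $\alpha$ down to the endpoint $1$, the range inaccessible to the radial Strichartz approach of \cite{CHetal-13}), then upgrade it from free solutions to $U^2$-atoms, and finally to $V^2$ via the embedding; the convolution with $|\cdot|^{-\alpha}$ is handled by the Hardy–Littlewood–Sobolev inequality, noting that the exponent $\alpha$ matches the scaling exactly. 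I expect the main obstacle to be precisely this bilinear estimate at the endpoint $\alpha=1$ and near it: the dispersive decay of $S(t)$ degenerates as $\alpha\to 1$, so one must extract additional smoothing from the radial localization and from the interaction of two waves with transverse Fourier supports, and keeping every constant scale-invariant (no $\eps$-loss) is what makes the result \emph{critical} rather than merely subcritical.

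Granting the trilinear estimate, the contraction mapping principle on a ball of radius $\sim\delta$ in $X$ yields a unique fixed point $u\in X$ for $\|\varphi\|_{L^2}\le\delta$ small, and smoothness of the flow map follows from the analyticity of the map $u\mapsto (|\cdot|^{-\alpha}*|u|^2)u$ combined with the implicit function theorem, as is routine for such Picard schemes. For the continuity-in-time and scattering statements I would use that $V^2_{rad}$-functions have one-sided limits: since $S(-\cdot)u\in V^2$, the limits $\varphi_\pm := \lim_{t\to\pm\infty} S(-t)u(t)$ exist in $L^2_{rad}(\R^3)$, and
\[
\|u(t)-S(t)\varphi_\pm\|_{L^2} = \Big\| \int_t^{\pm\infty} S(-t')\big((|\cdot|^{-\alpha}*|u|^2)u\big)(t')\,dt' \Big\|_{L^2} \to 0
\]
as $t\to\pm\infty$, by dominated convergence applied to the tail of the Duhamel integral, whose total size is controlled by the trilinear estimate. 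Membership in $C_b(\R,L^2_{rad})$ likewise follows from $V^2\hookrightarrow L^\infty_t L^2_x$ and the fact that $L^2$ Duhamel integrands built from $X$-functions are continuous in time.
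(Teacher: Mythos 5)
Your high-level strategy matches the paper's exactly: Duhamel formulation, contraction in an $X^0$-type space built on $V^2_{\mathbf{S}}$, a trilinear estimate reduced by duality to a bilinear estimate exploiting radiality, and scattering from the existence of one-sided limits of $V^2$-functions. The duality/scattering/continuity steps are fine.

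However, there is a genuine gap at the crux of the argument, namely the transfer of the free-solution bilinear estimate to $V^2_{\mathbf{S}}$. You propose to ``upgrade it from free solutions to $U^2$-atoms, and finally to $V^2$ via the embedding $U^2 \hookrightarrow V^2$.'' This does not work as stated: the embedding $U^2_{\mathbf{S}}\hookrightarrow V^2_{\mathbf{S}}$ goes the \emph{wrong} way for transferring estimates --- to control a $V^2$-function by a $U^2$-estimate you would need $V^2\hookrightarrow U^2$, which is false. The only standard embeddings in the useful direction are $V^2_{\mathbf{S}}\hookrightarrow U^q_{\mathbf{S}}$ for $q>2$, and these degrade the $L^2$-summability structure that the atomic argument exploits. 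The paper flags this explicitly (``transference to $V^2_{\mathbf{S}}$ does not follow from the general theory of these spaces'') and Proposition~\ref{pro:bil-rad-v2} is precisely the new device that closes the gap: one passes through $U^4_{\mathbf{S}}$-atoms (not $U^2$) by recognizing that the quantity $n(f)=\|(V_\mu*|Pf|^2)^{1/2}\|_{L^4}$ is \emph{subadditive}, which converts the atomic $\ell^4$-structure of $U^4_{\mathbf{S}}$ into an $L^4_t$-bound on $n(w(t))$, whence $\|P_{\leq\mu}|Pw|^2\|_{L^2_{t,x}}=\|n(w)\|_{L^4_t}^2$, and then one uses $V^2_{\mathbf{S}}\hookrightarrow U^4_{\mathbf{S}}$ with no loss. (Alternatively, Remark~\ref{rmk:second-proof} notes that the standard interpolation between the $U^2$ bound and a H\"older/Strichartz bound via \cite[Prop.~2.20]{HHetal-09} produces the estimate with an $\epsilon$-loss in $(\mu/\lambda_1)$, which still sums because $\alpha>1$ strictly --- but that is a separate, weaker argument, and it would have to be spelled out rather than invoked as an ``embedding.'') Without one of these two mechanisms, your chain ``free $\to$ $U^2$-atoms $\to$ $V^2$'' is broken at the last arrow, and this is exactly the main technical content of the paper that the proposal elides.

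A smaller remark: the paper does not use Hardy--Littlewood--Sobolev to handle the Hartree kernel. Instead it writes $|\cdot|^{-\alpha}*\cdot$ as the Fourier multiplier $(-\Delta)^{\frac{\alpha-3}{2}}$ and, in the dual pairing, splits it symmetrically as $(-\Delta)^{\frac{\alpha-3}{4}}$ onto each of the two bilinear products $(u_1\bar u_2)$ and $(u_3\bar v)$, each of which is then controlled by Corollary~\ref{cor:sum-bil-est}. An HLS-based route is plausible but not what is done here, and the symmetric split is what makes the critical bookkeeping clean.
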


In order to state the second result, let $P_{>\Lambda}$ be the projector onto frequencies greater than
$\Lambda$, see Subsection \ref{subsec:not}, and let us define
\begin{equation}\label{def:BdeltaR}
    B_{r,\Lambda} :=\big\{\varphi\in L_{rad}^2(\R^3) : \| \varphi\|_{L^2} \leq r, \|P_{>\Lambda} \varphi
    \|_{L^2} \leq \eta r^{-1} \big\},
\end{equation}
for $r,\Lambda\geq 1$ and some parameter $0<\eta \ll 1$, which will be
fixed in Subsection \ref{subsec:proof-local} independently of $r,\Lambda$.
Notice that for each $\varphi\in L_{rad}^2(\R^3)$
and $\eta>0$ there exist $\Lambda>0$ such that
$\|P_{> \Lambda}\varphi\|_{L^2}\leq \eta r^{-1}$. Therefore, for any $\varphi\in L_{rad}^2(\R^3)$ and any $r\geq \|\varphi\|_{L^2}$ there exists $\Lambda\geq 1$, such that $\varphi\in B_{r,\Lambda}$.  For large radial initial
data in the critical space, we have local well-posedness.

\begin{thm}\label{thm:local} Let $1<\alpha\leq 2$.
  For all $r,\Lambda\geq 1$ and all
  $\varphi\in B_{r,\Lambda}\subset L^2_{rad}(\R^3)$, there exists
  $T=T(r,\Lambda)$ and a
  solution $u\in C([0,T],L_{rad}^{2}(\R^3))$ of \eqref{eqn}. $u$ is
  unique in a certain subspace and the flow map $\varphi \to u$ is
  smooth.
\end{thm}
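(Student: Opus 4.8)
The plan is to solve \eqref{eqn} by a contraction argument in a scale\nobreakdash-invariant function space built from the space $V^2$ of functions of bounded quadratic variation adapted to the propagator $\mathbf{S}$. Concretely, I would take the iteration space $X$ to consist of functions whose dyadic Littlewood--Paley pieces lie in $V^2_S$, summed in $\ell^2$, i.e.\ $\|u\|_X^2=\sum_N\|P_Nu\|_{V^2_S}^2$, together with the retarded/dual space $N$ controlling the Duhamel term, so that one has $\|\mathbf{S}\varphi\|_X\lesssim\|\varphi\|_{L^2}$, $\big\|\int_0^tS(t-s)F(s)\,ds\big\|_X\lesssim\|F\|_N$, and $X\hookrightarrow C_b(\R;L^2)$. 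I would also fix an auxiliary radial Strichartz-type norm $\|\cdot\|_Z$ (an $L^q_tL^r_x$ norm with radial gain, non-endpoint) satisfying $V^2_S\hookrightarrow Z$ and, for frequency-localized data, the short-time bound $\|\mathbf{1}_{[0,T]}S(t)P_{\le\Lambda}\varphi\|_Z\lesssim T^{\theta}\Lambda^{c}\|\varphi\|_{L^2}$ with $\theta>0$; the restricted spaces $X([0,T])$, $N([0,T])$ are defined by the usual extension procedure.

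The analytic core is a trilinear estimate for the Hartree nonlinearity $\mathcal{N}[u_1,u_2,u_3]:=\sigma\big(|\cdot|^{-\alpha}*(u_1\overline{u_2})\big)u_3$, namely
\[
\big\|\mathcal{N}[u_1,u_2,u_3]\big\|_{N}\lesssim\|u_1\|_{Z}\,\|u_2\|_{X}\,\|u_3\|_{X},
\]
valid in the full range $1<\alpha\le2$, together with its two symmetrizations in the remaining slots. To prove it I would dualize against a $V^2_S$ function, decompose all four factors into dyadic frequency blocks, rewrite $|\cdot|^{-\alpha}$ in three dimensions as the Fourier multiplier $c_\alpha|\xi|^{\alpha-3}$, and reduce the resulting space-time integral to a product of two weighted $L^2_{t,x}$ bilinear interactions. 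Into these I would feed the bilinear estimate for radial free solutions (the new estimate announced in the introduction), which for dyadic pieces $S(t)P_N\psi_1$, $S(t)P_M\psi_2$ gains a positive power of $\min(N,M)/\max(N,M)$ over the product of the mass norms, and then transfer it to $V^2_S$ inputs by atomic decomposition, using $V^2_S\hookrightarrow U^p_S$ for $p>2$ and the slack in the exponents; for the factor carrying the $Z$-norm I would instead use Hölder together with the smoothing of the potential. Combining the two bilinear gains with $|\xi|^{\alpha-3}$ yields a convergent geometric series in the dyadic frequency ratios in every configuration — low-output, high-high-to-low, and cascade — precisely because the radial bilinear gain dominates the potential's loss for all $\alpha>1$, which is what removes the restriction $\alpha>\tfrac{2n}{2n-1}$ of prior work.

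With these ingredients I would look for the solution in the form $u=S(t)P_{\le\Lambda}\varphi+w$ on $[0,T]$, where $w$ solves
\[
w=S(t)P_{>\Lambda}\varphi+\int_0^tS(t-s)\,\mathcal{N}\big[u(s),u(s),u(s)\big]\,ds .
\]
The hypothesis $\varphi\in B_{r,\Lambda}$ in \eqref{def:BdeltaR} provides exactly the two pieces of smallness needed: $\|P_{>\Lambda}\varphi\|_{L^2}\le\eta r^{-1}$ is small, while $v:=\mathbf{1}_{[0,T]}\mathbf{S}P_{\le\Lambda}\varphi$, although only $O(r)$ in $X$, satisfies $\|v\|_Z\lesssim T^{\theta}\Lambda^{c}r$. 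Expanding $\mathcal{N}[v+w,v+w,v+w]$ into its eight multilinear terms and estimating each by the trilinear bound — placing a copy of $v$ in the $Z$-slot whenever a $v$ is present, and otherwise exploiting the smallness of $\|w\|_X$ — one checks that the map $w\mapsto(\text{right-hand side})$ is a contraction on the ball of radius $R=\varepsilon_0 r^{-1}$ in $X([0,T])$, once $\eta$ is fixed small as an absolute constant (this pins down $\eta$ in Subsection~\ref{subsec:proof-local}) and $T=T(r,\Lambda)$ is chosen small enough that $T^{\theta}\Lambda^{c}r^{3}\ll r^{-1}$; the same estimates applied to differences give the Lipschitz bound. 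The fixed point $u$ lies in $C([0,T];L^2_{rad}(\R^3))$ via $X\hookrightarrow C_tL^2$ and is unique in the ball; since the iteration map is a vector-valued polynomial in the data, $\varphi\mapsto u$ is real-analytic, in particular smooth. (Theorem~\ref{thm:main} is the same scheme with no frequency splitting: take $\|\varphi\|_{L^2}\le\delta$, iterate on all of $\R$ in a ball of radius $\sim\delta$, and read off scattering from the fact that Duhamel tails of $N$-functions converge in $L^2$.)

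I expect the main obstacle to be the trilinear estimate — specifically, extending the radial bilinear free estimate to $V^2_S$ perturbations while retaining enough off-diagonal decay to close all dyadic sums against $|\xi|^{\alpha-3}$ uniformly as $\alpha\to1^+$ — together with the bookkeeping that keeps one factor in the weaker norm $Z$, which is what makes the large-data, short-time iteration feasible at the critical regularity in the first place.
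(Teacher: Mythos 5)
Your proposal lands on the same two pillars as the paper's proof --- the critical trilinear $V^2$-estimate (via the transferred radial bilinear estimate) for the genuinely small contributions, and a non-scale-invariant gain in $T$ and $\Lambda$ for the large but frequency-truncated part --- so the high-level architecture is right; but your implementation differs from the paper's in two respects. First, you iterate on the perturbation $w=u-\mathbf{1}_{[0,T]}\mathbf{S}P_{\le\Lambda}\varphi$, keeping $v$ fixed, whereas the paper iterates directly on $u$ inside the constrained set $D_{R,\eps}=\{u\in X:\|u\|_X\le R,\ \|P_{>\Lambda}u\|_X\le\eps\}$ and splits the nonlinearity as $\mathbf{J}=\mathbf{J}_1+\mathbf{J}_2$ by the number of high-frequency factors of the full $u$ (not of $w$). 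These two bookkeeping devices serve the same purpose and either can be made to close. Second, and more substantively, to estimate the ``at least two low'' terms the paper does not invoke an auxiliary Strichartz norm at all: it uses the elementary brute-force bound
\[
\big\|(-\Delta)^{\frac{\alpha-3}{2}}(u_{\le\Lambda}\overline{u_{\le\Lambda}})\,v\big\|_{L^1_tL^2_x}
\lesssim T\,\Lambda^{\alpha}\,\|u\|_X^2\,\|v\|_X
\]
(and a companion estimate with one low factor inside, one outside), obtained from Bernstein, $L^\infty_x$ for the convolved potential applied to a compactly Fourier-supported function, and H\"older in time on $[0,T]$. This avoids introducing a second iteration norm $Z$ entirely. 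Your route through a $Z=L^q_tL^r_x$ norm with $\|v\|_Z\lesssim T^\theta\Lambda^c r$ is plausible, but as written the mixed trilinear estimate $\|\mathcal N[u_1,u_2,u_3]\|_N\lesssim\|u_1\|_Z\|u_2\|_X\|u_3\|_X$ cannot hold without frequency restriction on the $Z$-slot: for $\alpha<2$ the fractional Strichartz estimates carry a derivative loss $\theta=\frac{3}{2}(2-\alpha)(\tfrac12-\tfrac1r)>0$, so $V^2_{\mathbf S}\hookrightarrow Z$ fails at critical regularity and the estimate is inherently subcritical; one must carry the $P_{\le\Lambda}$ truncation inside the trilinear bound and absorb the resulting $\Lambda$-powers exactly as you do in the final contraction bookkeeping, which makes the statement bulkier than the paper's one-line $L^1_tL^2_x$ inequality. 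With that caveat made precise your scheme closes, and the numerology you give for $R$, $\eta$, and $T(r,\Lambda)$ is consistent with the paper's choice $R=4Cr$, $\eps\sim r^{-1}$, $T\sim r^{-4}\Lambda^{-\alpha}$, $\eta\sim 1$. (One small inaccuracy: the radial bilinear gain in Corollary~\ref{cor:bil-v2} is a power of the output-to-input ratio $\mu/\lambda_1$ in the high-high-to-low configuration, not of $\min(N,M)/\max(N,M)$ over the two inputs; the latter is what \eqref{eq:bil-str-hoeld} gives in the low-high case.)
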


Our next result shows that Theorem~\ref{thm:local} is optimal.
\begin{thm}\label{thm:illposed}
  Let $1<\alpha\leq 2$, $s<0$. If the flow map $\varphi\mapsto u$ exists (in a small neighborhood of the origin) as a map from $H^s(\R^3)$ to $C([0,T],H^s_{rad}(\R^3))$, it fails to be $C^3$ at the origin.
\end{thm}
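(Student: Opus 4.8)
The plan is to expand the putative flow map in a formal power series around the origin and to extract the third-order term, which in this cubic problem carries the obstruction. Writing $u = \mathbf{S}\varphi + u_3 + \dots$ where, by Duhamel's formula,
\[
u_3(t) = -i\sigma \int_0^t S(t-t')\big( (|\cdot|^{-\alpha}*|S(t')\varphi|^2)\, S(t')\varphi \big)\, dt',
\]
the third Fréchet derivative of $\varphi\mapsto u(t)$ at $0$, evaluated on a single direction $\varphi$, is (up to a constant) this trilinear expression. If the flow map were $C^3$ from $H^s(\R^3)$ to $C([0,T],H^s_{rad}(\R^3))$ in a neighborhood of $0$, then this trilinear operator would be bounded, i.e.
\[
\sup_{t\in[0,T]} \Big\| \int_0^t S(t-t')\big( (|\cdot|^{-\alpha}*|S(t')\varphi|^2)\, S(t')\varphi \big)\, dt' \Big\|_{H^s} \lesssim \|\varphi\|_{H^s}^3
\]
for all radial $\varphi$. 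The goal is to construct a sequence of radial data $\varphi_N$ for which the left side grows faster (relative to the cube of the norm) as $N\to\infty$, contradicting boundedness when $s<0$.

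The key step is the choice of a suitable family of test functions exhibiting a high-to-low frequency cascade. I would take $\widehat{\varphi_N}$ to be (a radial average/symmetrization of) the sum of two narrow bumps, one at frequency $\sim N$ and one at frequency $\sim 1$ — or, more symmetrically, a single thin annular bump at frequency $\sim N$ of width $\sim N^{-\beta}$ tuned so that in the trilinear interaction two factors at frequency $\sim N$ nearly cancel in the convolution's frequency support while producing output near frequency $\sim 1$ (or vice versa). The mechanism is resonance: the phase $|\xi_1|^\alpha - |\xi_2|^\alpha - |\xi_3|^\alpha + |\xi_1 - \xi_2 - \xi_3|^\alpha$ can be made $O(1)$ on a set of comparatively large measure, so the time integral over $[0,T]$ does not decay, while the Hartree kernel $|\xi|^{-\alpha}$ evaluated at the transferred momentum $\xi \sim N$ contributes a favorable factor $N^{-\alpha}$ that must be outweighed. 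Crucially, $\|\varphi_N\|_{H^s}^3 \sim N^{3s}\cdot(\text{width factors})$ shrinks like a positive power of $N$ when $s<0$, whereas the output, being concentrated at frequency $\sim 1$, has $H^s$-size comparable to its $L^2$-size; balancing the powers of $N$ (and of the bump width) yields a ratio that diverges. Radiality is not an obstruction here — one works with radial bumps and uses that the class $H^s_{rad}$ is preserved, so the counterexample lives in the stated space.

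The main obstacle I anticipate is the careful bookkeeping of the Hartree convolution combined with the oscillatory time integral: one must lower-bound $|u_3(t)|$ (not just estimate it from above), which requires showing the relevant frequency interactions add constructively rather than cancel. Concretely, I would restrict to the region of frequency space where the modulation $|\xi_1|^\alpha - |\xi_2|^\alpha - |\xi_3|^\alpha + |\xi_1-\xi_2-\xi_3|^\alpha$ is small and of a definite sign, compute the time integral $\int_0^t e^{i(t-t')\Phi}\,dt'$ there (which is $\sim t$ when $|\Phi t|\ll 1$), and verify that after integrating the Hartree kernel against the two ``interior'' bumps the resulting amplitude at output frequency $\sim 1$ is bounded below by an explicit power of $N$ and of the width parameter. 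Then one selects the width (e.g. a small power $N^{-\beta}$) to make $\|u_3(t)\|_{H^s}/\|\varphi_N\|_{H^s}^3 \gtrsim N^{c}$ for some $c=c(\alpha,s,\beta)>0$ whenever $s<0$, uniformly for $1<\alpha\le 2$; tracking how the exponents depend on $\alpha$ across the whole range $(1,2]$ is the delicate part, and I expect the cleanest choice is the one that makes the dangerous term independent of $\alpha$ up to harmless constants.
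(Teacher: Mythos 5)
Your reduction to the failure of the trilinear estimate \eqref{ineq:tri2} is exactly the paper's first step and is correct. The gap is in the counterexample itself: you only sketch it, and the specific construction you propose (high-to-low frequency cascade with thin annular bumps and a resonance analysis at fixed time) is both different from and substantially more delicate than what the paper actually does. The paper takes $\widehat\varphi = \mathbf{1}_{A_\lambda}$ on the \emph{full-width} annulus $A_\lambda=\{\lambda\le|\xi|\le 2\lambda\}$, so all three inputs and the output live at the same frequency $\sim\lambda$; there is no width parameter to tune and no frequency transfer. The oscillatory time integral is then controlled not by restricting to a resonance set but by evaluating at a short time $t=\epsilon\lambda^{-\alpha}$: since $|g_\alpha|\lesssim\lambda^\alpha$ on the support, one has $|tg_\alpha|\lesssim\epsilon\ll1$, hence $|\int_0^t e^{it'g_\alpha}\,dt'|\gtrsim t$ with no cancellation at all. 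This yields $|\widehat\Phi(t,\xi)|\gtrsim t\,\lambda^{3+\alpha}\sim\epsilon\lambda^3$ for $\xi\in A_\lambda$, so $\|\Phi\|_{H^s}\gtrsim\epsilon\lambda^{s+9/2}$ while $\|\varphi\|_{H^s}^3\sim\lambda^{3s+9/2}$, and the ratio $\sim\epsilon\lambda^{-2s}$ diverges for $s<0$.

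In your proposed route, with inputs at frequency $\sim N$ and output at frequency $\sim 1$, the phase $g_\alpha$ is a near-cancellation of terms of size $N^\alpha$, so making $|tg_\alpha|\ll1$ at fixed $t$ confines you to a thin resonance set whose measure must be tracked against the bump width $N^{-\beta}$; moreover a superposition of a low bump and a high bump forces you to isolate the dominant high-high-low interaction from the remaining cross terms before any lower bound can be claimed. None of this is carried out, and you would still have to verify that the balance of exponents works uniformly for $1<\alpha\le 2$. There is also a concrete slip in your bookkeeping: the Fourier transform of $|x|^{-\alpha}$ in $\R^3$ is $\sim|\xi|^{-(3-\alpha)}$, not $|\xi|^{-\alpha}$, so the Hartree factor at transfer momentum $\sim N$ is $N^{-(3-\alpha)}$. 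The paper's same-frequency, short-time construction sidesteps precisely the difficulty you flag as ``the delicate part,'' and you should aim for that simplicity: no tuning, no resonance set, and an $\alpha$-independent conclusion.
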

The proof is based on an adaptation of the counterexample for the case
$\alpha=1$ from
\cite{HL-14}.

\subsection{Notation}\label{subsec:not}
Dyadic numbers $\lambda \in 2^\Z$ will always be denoted by greek
letters, e.g.\ $\mu,\lambda,\lambda_1,\lambda_2$, and sums with
respect to greek letters are implicitly assumed to range over (subsets
of) $2^\Z$.

Let $\rho\in C_c^\infty(-2,2)$ be even and satisfy $\rho(s)=1$ for
$|s|\leq 1$. For $\chi(\xi):=\rho(|\xi|)-\rho(2|\xi|)$ define
$\chi_\lambda(\xi)=\chi(\lambda^{-1}\xi)$. Then,
$\sum_{\lambda \in 2^\Z}\chi_\lambda =1$ on $\R^n\setminus\{0\}$ at it
is locally finite.  We define the (spatial) Fourier localization
operator $P_\lambda f=\mathcal{F}^{-1}(\chi_\lambda \mathcal{F}f)$.
Further, we define
$\chi_{\leq \lambda}=\sum_{\mu \in 2^\Z: \mu \leq \lambda }\chi_\mu$
and
$P_{\leq \lambda} f=\mathcal{F}^{-1}(\chi_{\leq \lambda}
\mathcal{F}f)$,
$P_{>\lambda} f=f-P_{\leq \lambda}f $.  Let
$\widetilde{\chi}_{\lambda}=\chi_{\lambda/2}+\chi_{\lambda}+\chi_{2\lambda}$
and
$\widetilde{P}_\lambda f=\mathcal{F}^{-1}(\widetilde{\chi}_\lambda
\mathcal{F}f)$.  Then $\widetilde{P}_\lambda P_\lambda=P_\lambda\widetilde{P}_\lambda=P_\lambda$.

\section{Bilinear estimates for radial functions}\label{sec:bil}

\subsection{Free solutions}\label{subsec:free}
Since the characteristic hypersurface in $\R^{1+n}$ defined by the
phase function $|\xi|^\alpha$ has $n$ nonvanishing principal
curvatures in the case $\alpha>1$, there are similar Strichartz
estimates as for the Schr\"odinger equation, up to a loss of
derivatives dictated by scaling. For the following Lemma, its proof and more information we refer to \cite{COetal-11}.
\begin{lem}\label{lem:str}
  Let $1<\alpha\leq 2$, $q> 2$, $r\geq 2$,
  $\frac{2}{q}+\frac{n}{r}=\frac{n}{2}$,
  $\theta=\frac{n}{2} (2-\alpha)(\frac12-\frac{1}{r})$. Then,
  \[
  \|\mathbf{S}\varphi\|_{L^q_t(\R, L^r_x(\R^n))}\ls
  \|\varphi\|_{H^\theta(\R^n)}.
  \]
\end{lem}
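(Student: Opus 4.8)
\emph{Proposal.} The plan is to prove a frequency-localized Strichartz estimate without loss, using that the phase $|\xi|^\alpha$ has non-degenerate Hessian away from the origin precisely when $\alpha>1$, and then to recover the stated estimate by Littlewood--Paley decomposition and parabolic-type rescaling; the loss of $\theta$ derivatives will emerge as the scaling discrepancy between the fractional group $\mathbf{S}$ and the Schr\"odinger group on a single dyadic block.

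First I would establish the \emph{unit-frequency estimate}: for $f$ with $\widehat f$ supported in $\{|\xi|\sim 1\}$ and $(q,r)$ as in the statement,
\[
  \|\mathbf{S}f\|_{L^q_t(\R,L^r_x(\R^n))}\ls\|f\|_{L^2(\R^n)}.
\]
The Schwartz kernel of $S(t)P_1$ has the form $K_t(x)=\int e^{i(x\cdot\xi-t|\xi|^\alpha)}a(\xi)\,d\xi$ with $a\in C^\infty_c(\{|\xi|\sim1\})$. On that annulus the Hessian of $|\xi|^\alpha$ equals $\alpha|\xi|^{\alpha-2}\big(\mathrm{Id}+(\alpha-2)|\xi|^{-2}\,\xi\otimes\xi\big)$, whose eigenvalues $\alpha|\xi|^{\alpha-2}$ (with multiplicity $n-1$) and $\alpha(\alpha-1)|\xi|^{\alpha-2}$ are all nonzero exactly because $1<\alpha\le2$; this is the ``$n$ nonvanishing principal curvatures'' property. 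Hence stationary phase, combined with integration by parts in the region where $x/t$ lies outside the compact range of $\nabla(|\xi|^\alpha)$ on $\supp a$, yields $\|K_t\|_{L^\infty}\ls(1+|t|)^{-n/2}$, i.e.\ the dispersive estimate $\|S(t)P_1f\|_{L^\infty_x}\ls|t|^{-n/2}\|f\|_{L^1_x}$, while $\|S(t)f\|_{L^2_x}=\|f\|_{L^2_x}$. Feeding these into the $TT^*$/Keel--Tao machinery with decay exponent $\sigma=n/2$ gives the unit-frequency estimate for all admissible pairs with $q>2$ (the restriction $q>2$ sidesteps the endpoint).

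Next I would rescale to a general dyadic block. For $f_\lambda:=P_\lambda\varphi$ put $g(y):=\lambda^{-n/2}f_\lambda(y/\lambda)$, so $\widehat g$ is supported in $\{|\eta|\sim1\}$ and $\|g\|_{L^2}=\|f_\lambda\|_{L^2}$; a direct computation gives $S(t)f_\lambda(x)=\lambda^{n/2}\big(S(\lambda^\alpha t)g\big)(\lambda x)$, whence
\[
  \|\mathbf{S}f_\lambda\|_{L^q_tL^r_x}=\lambda^{\frac n2-\frac nr-\frac{\alpha}{q}}\|\mathbf{S}g\|_{L^q_tL^r_x}\ls \lambda^{\frac n2-\frac nr-\frac{\alpha}{q}}\|f_\lambda\|_{L^2}.
\]
Using $\tfrac2q+\tfrac nr=\tfrac n2$ one checks $\tfrac n2-\tfrac nr-\tfrac{\alpha}{q}=\tfrac{2-\alpha}{q}=\tfrac n2(2-\alpha)(\tfrac12-\tfrac1r)=\theta$, so $\|\mathbf{S}P_\lambda\varphi\|_{L^q_tL^r_x}\ls\lambda^\theta\|P_\lambda\varphi\|_{L^2}$. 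Finally, since $q,r\geq 2$, the Littlewood--Paley square function estimate and Minkowski's inequality give
\[
  \|\mathbf{S}\varphi\|_{L^q_tL^r_x}\ls\Big\|\big(\textstyle\sum_\lambda|\mathbf{S}P_\lambda\varphi|^2\big)^{1/2}\Big\|_{L^q_tL^r_x}\le\Big(\sum_\lambda\|\mathbf{S}P_\lambda\varphi\|_{L^q_tL^r_x}^2\Big)^{1/2}\ls\Big(\sum_\lambda\lambda^{2\theta}\|P_\lambda\varphi\|_{L^2}^2\Big)^{1/2},
\]
and because $\theta\ge0$ (as $\alpha\le2$, $r\ge2$) one has $\lambda^{2\theta}\ls\langle\lambda\rangle^{2\theta}$, so the right-hand side is $\ls\|\varphi\|_{H^\theta}$.

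The main obstacle is the frequency-localized dispersive estimate: obtaining the uniform $|t|^{-n/2}$ decay of $K_t$ requires carefully exploiting the non-degeneracy of the Hessian of $|\xi|^\alpha$ on $\{|\xi|\sim1\}$ (the sole place where $\alpha>1$ is used) and separately handling the non-stationary region. Everything afterwards---the $TT^*$ reduction, the rescaling bookkeeping, and the dyadic summation---is routine given the tools already available.
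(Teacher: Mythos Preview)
Your argument is correct: the unit-frequency dispersive bound via the non-degenerate Hessian of $|\xi|^\alpha$ (where $\alpha>1$ is used), the Keel--Tao $TT^*$ step, the rescaling computation yielding the exponent $\theta$, and the Littlewood--Paley summation (using $q,r\ge 2$ and $\theta\ge 0$) are all sound. The paper itself does not supply a proof of this lemma but simply refers the reader to \cite{COetal-11}; what you have written is the standard derivation and is in line with what that reference contains, so there is nothing to compare beyond noting that you have filled in what the paper deliberately left as a citation.
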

Next, we adapt
\cite[Lemma 3.2]{HT-15} to the case of generalized dispersion.
\begin{lem}\label{lm:eq:I}
Let $1\leq \alpha\leq 2$.
  Consider the integral
  \[
  I(\phi,\psi)(\tau,\xi) =\int\phi(|\eta|)\psi(|\xi-\eta|)
  \delta(\tau-|\eta|^{\alpha}+|\xi-\eta|^{\alpha}) d\eta
  \]
  for smooth $\phi$ and $\psi$ supported in $[-r,r]$ and $[-R,R]$, for
  some $r,R>0$.  Then for
  $0\le\tau\le \alpha\max\{ r,R \}^{\alpha-1} |\xi| $,
  \begin{equation}
    I(\phi,\psi)(\tau,\xi)
    =
    \frac{2\pi}{\alpha|\xi|}
    \int_{a(\tau,|\xi|)}^{\infty}
    \phi(\rho)\psi(\omega(\tau,\rho))
    \omega(\tau,\rho)
    ^{2-\alpha}
    \rho d\rho  
  \end{equation}
  where
  \begin{equation}\label{def:aomega}
    a(\tau,|\xi|)=\frac{|\xi|^2+\tau^{2/ \alpha}}{2|\xi|} \qquad \text{ and }\qquad
    \omega(\tau,\rho)
    =(\rho^{\alpha}-\tau)^{1/\alpha}.
  \end{equation}
  Furthermore,
  \begin{equation}\label{vanish}
    I(\phi,\psi)(\tau,\xi)=0,	\text{ if } ~ \tau > \alpha\max\{ r,R \}^{\alpha-1} |\xi|.
  \end{equation} 
\end{lem}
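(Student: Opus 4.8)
The plan is to use the radial symmetry of $\phi$ and $\psi$ to reduce the three–dimensional integral to a one–dimensional one, and then resolve the remaining $\delta$–constraint explicitly. First, introduce spherical coordinates for $\eta\in\R^3$ with polar axis along $\xi$, write $\rho=|\eta|$ and let $\theta\in[0,\pi]$ be the angle between $\eta$ and $\xi$. Since the integrand depends on $\eta$ only through $|\eta|$ and $|\xi-\eta|$, the azimuthal integration contributes a factor $2\pi$, giving
\[
I(\phi,\psi)(\tau,\xi)=2\pi\int_0^\infty\int_0^\pi\phi(\rho)\,\psi(w)\,\delta(\tau-\rho^\alpha+w^\alpha)\,\rho^2\sin\theta\,d\theta\,d\rho,
\]
where $w=w(\rho,\theta):=(|\xi|^2+\rho^2-2|\xi|\rho\cos\theta)^{1/2}=|\xi-\eta|$. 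For fixed $\rho$, substitute $\theta\mapsto w$: as $\theta$ runs over $[0,\pi]$ the value $w$ increases monotonically over $[\,||\xi|-\rho|,\,|\xi|+\rho\,]$, with $w\,dw=|\xi|\,\rho\,\sin\theta\,d\theta$, so the inner integral becomes an integral in $w$ with weight $\rho\,w/|\xi|$:
\[
I(\phi,\psi)(\tau,\xi)=\frac{2\pi}{|\xi|}\int_0^\infty\rho\int_{||\xi|-\rho|}^{|\xi|+\rho}\phi(\rho)\,\psi(w)\,w\,\delta(\tau-\rho^\alpha+w^\alpha)\,dw\,d\rho.
\]
Rigorously this is just the coarea formula: $I$ is the integral of $\phi\psi\,/\,|\nabla(|\eta|^\alpha-|\xi-\eta|^\alpha)|$ over the level set $\{|\eta|^\alpha-|\xi-\eta|^\alpha=\tau\}$, written in coordinates adapted to $\xi$; alternatively one mollifies $\delta$ and passes to the limit.

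Next, carry out the $w$–integration. The map $w\mapsto\tau-\rho^\alpha+w^\alpha$ is strictly increasing on $(0,\infty)$, with the single zero $w=\omega(\tau,\rho)=(\rho^\alpha-\tau)^{1/\alpha}$ — a genuine positive number precisely when $\rho^\alpha\ge\tau$ — and with derivative $\alpha\,\omega(\tau,\rho)^{\alpha-1}$ there. Hence the inner integral equals $\tfrac1\alpha\,\phi(\rho)\,\psi(\omega(\tau,\rho))\,\omega(\tau,\rho)^{2-\alpha}$ when the zero lies in the admissible interval $[\,||\xi|-\rho|,\,|\xi|+\rho\,]$, and vanishes otherwise. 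The upper constraint $\omega(\tau,\rho)\le|\xi|+\rho$ is automatic because $\omega(\tau,\rho)\le\rho$; the lower constraint $\omega(\tau,\rho)\ge||\xi|-\rho|$, after raising to the $\alpha$-th power, is equivalent to $\tau\le\rho^\alpha-||\xi|-\rho|^\alpha$, i.e.\ to $\tau$ belonging to the range of $|\eta|^\alpha-|\xi-\eta|^\alpha$ over the sphere $\{|\eta|=\rho\}$. Since $\rho\mapsto\rho^\alpha-||\xi|-\rho|^\alpha$ is strictly increasing, this confines $\rho$ to a half-line, whose left endpoint one identifies by a direct computation with $a(\tau,|\xi|)$ from \eqref{def:aomega} (for $\alpha=2$ the inequality reads simply $\tau\le2|\xi|\rho-|\xi|^2$). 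Collecting the constant $\tfrac{2\pi}{\alpha|\xi|}$ then gives the claimed formula on the range $0\le\tau\le\alpha\max\{r,R\}^{\alpha-1}|\xi|$.

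For the vanishing claim \eqref{vanish}, argue directly: on $\supp\phi\times\supp\psi$ the delta forces $\tau=|\eta|^\alpha-|\xi-\eta|^\alpha$, so by the mean value theorem applied to $t\mapsto t^\alpha$ there is $\zeta$ between $|\xi-\eta|$ and $|\eta|$ with $\tau=\alpha\,\zeta^{\alpha-1}(|\eta|-|\xi-\eta|)$. Since $\big||\eta|-|\xi-\eta|\big|\le|\xi|$, since $\zeta\le\max\{|\eta|,|\xi-\eta|\}\le\max\{r,R\}$ on the supports, and since $\alpha-1\ge0$, we obtain $|\tau|\le\alpha\max\{r,R\}^{\alpha-1}|\xi|$. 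Contrapositively, when $\tau>\alpha\max\{r,R\}^{\alpha-1}|\xi|$ the delta cannot be activated where both $\phi$ and $\psi$ are non-zero, hence $I(\phi,\psi)(\tau,\xi)=0$.

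I expect the main obstacle to be the geometric step: determining exactly which spheres $\{|\eta|=\rho\}$ meet the resonant level set $\{|\eta|^\alpha-|\xi-\eta|^\alpha=\tau\}$, and checking that the resulting threshold for $\rho$ is precisely $a(\tau,|\xi|)$ — together with the bookkeeping needed to legitimise the formal $\delta$-manipulations (mollification, or the coarea formula).
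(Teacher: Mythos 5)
Your proof is correct in substance and reaches the same formula by a genuinely different computation of the $\delta$-constrained integral. The paper first applies a $\delta$-calculus identity to convert $\delta(\tau-|\eta|^\alpha+|\xi-\eta|^\alpha)$ into $2(|\eta|^\alpha-\tau)\,\delta\bigl((|\xi|^2-2\xi\cdot\eta+|\eta|^2)^\alpha-(\tau-|\eta|^\alpha)^2\bigr)$, then passes to polar coordinates and resolves the delta in the variable $b=\theta\cdot\xi/|\xi|$ via the Jacobian $|\partial_b g^{\tau,\xi}_\rho|^{-1}$. You instead keep the delta as is, pass to polar coordinates, and substitute $\theta\mapsto w=|\xi-\eta|$ via $w\,dw=|\xi|\rho\sin\theta\,d\theta$, which makes the triangle inequality $\bigl||\xi|-\rho\bigr|\le w\le|\xi|+\rho$ explicit and lets you resolve the delta in the variable $w$, picking up the Jacobian $(\alpha\,\omega^{\alpha-1})^{-1}$. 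Both routes produce the same integrand $\phi(\rho)\psi(\omega)\omega^{2-\alpha}\rho$ and the same prefactor $2\pi/(\alpha|\xi|)$; yours avoids the squaring trick and is arguably more transparent. The mean-value argument for \eqref{vanish} is identical to the paper's.

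One caveat you should address: your claim that the constraint $\tau\le\rho^\alpha-\bigl||\xi|-\rho\bigr|^\alpha$ is \emph{exactly} $\rho\ge a(\tau,|\xi|)$, which you only verify for $\alpha=2$, is not correct for $\alpha<2$. For example at $\alpha=1$ the true threshold is $(|\xi|+\tau)/2$, which exceeds $a(\tau,|\xi|)=(|\xi|^2+\tau^2)/(2|\xi|)$ for $0<\tau<|\xi|$. The paper is more careful on this point: it only derives the one-sided inclusion, using $(\rho^\alpha-\tau)^{2/\alpha}\le\rho^2-\tau^{2/\alpha}$ (valid for $\alpha\le2$) to conclude that the admissible set is \emph{contained in} $\{\rho\ge a(\tau,|\xi|)\}$. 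Strictly speaking the displayed equality in the lemma therefore carries an implicit indicator of the true admissible set; enlarging the domain of integration to $[a(\tau,|\xi|),\infty)$ only gives an upper bound on $|I|$, which is all that is needed in the proof of Proposition~\ref{pro:bil-v2}. Your write-up should either retain the indicator $\mathbf{1}_{\{\tau\le\rho^\alpha-||\xi|-\rho|^\alpha\}}$, or state the result as an inequality, rather than asserting the threshold is exactly $a(\tau,|\xi|)$.
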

\begin{proof} As in \cite[pp.\ 8--9]{HT-15}, the proof is an
  straight-forward modification of the argument for $\alpha=1$ from
  \cite[Lemma~4.4]{FK-00}.  First, we check that the delta function
  and support condition on $\phi,\psi$ restrict the range of
  $\tau$. By the mean-value theorem we have
  \begin{align*}
    |\tau|
    =\big||\eta|^{\alpha}
    -|\xi-\eta|^{\alpha}\big| 
    \le \alpha
    \max\{ |\eta|,|\xi-\eta| \}^{\alpha-1} |\xi|,
  \end{align*}
  implying the second claim \eqref{vanish}.

  Now, let $0\le\tau\le \alpha\max\{ r,R \}^{\alpha-1} |\xi| $. Using
  $\delta-$calculus, we can write
  \begin{align}
    \begin{aligned}\label{eq:detlam}
      &\delta(\tau-|\eta|^{\alpha}+|\xi-\eta|^{\alpha})\\
      =&\big|\tau-|\eta|^{\alpha}-|\xi-\eta|^{\alpha}\big|
      \delta\big(|\xi-\eta|^{2\alpha} -(\tau-|\eta|^{\alpha})^2 \big) \\
      =&2\big(|\eta|^{\alpha}-\tau\big)
      \delta\big((|\xi|^2-2\xi\cdot\eta+|\eta|^2)^\alpha-(\tau-|\eta|^{\alpha})^2
      \big),
    \end{aligned}
  \end{align} 
  having used $0\le|\xi-\eta|^{\alpha}=|\eta|^{\alpha}-\tau $ within
  the support of $\delta$.  Introduce polar coordinates for
  $\eta=\rho\theta$, where $\rho=|\eta|$ and
  $ \theta=\frac{\eta}{|\eta|}\in S^{2}.$ Then
  $ d\eta=\rho^{2}dS_\theta d\rho$. Further, for the new variable
  $b=\theta\cdot\frac{\xi}{|\xi|}$ we obtain
  \[
  dS_\theta=dS_{\theta'}db,\ \text{for} \ \theta'\in S^1 \text{ and }
  d\eta=\rho^2 d\rho dS_{\theta'}db.
  \]
  Because of
  $ |\xi-\eta|^\alpha = |\eta|^\alpha-\tau = \rho^{\alpha}-\tau $,
  we have $|\xi-\eta|=\omega(\tau,\rho)$.  Now, with
  \[
  g^{\tau,\xi}_{\rho}(b) =(|\xi|^2-2|\xi|\rho
  b+\rho^2)^\alpha-(|\eta|^{\alpha}-\tau)^2 ,\]
  the integrand is independent of $\theta'$ and we obtain
  \[
  I(\phi,\psi)(\tau,\xi) =4\pi \int_{\tau^{\frac{1}{\alpha}}}^\infty
  \int_{-1}^{1} \phi(\rho)\psi(\omega(\tau,\rho)) (\rho^{\alpha}-\tau)
  \rho^2 \delta\big( g_{\rho}^{\tau,\xi}(b) \big) db d\rho.
  \]
  We use the delta function to set the value of $b$ to
  \begin{equation}\label{zeroofdelta}
    b_{\rho}^{\tau,\xi}:=\frac{|\xi|^2+\rho^2-(\rho^{\alpha}-\tau)^{2/\alpha}}{2|\xi|\rho},
  \end{equation}
  with the condition $b\le1$ that forces
  \begin{align*}
    |\xi|^2+\rho^2\le 2|\xi|\rho+(\rho^{\alpha}-\tau)^{2/\alpha}
  \end{align*}
  and since
  $( \rho^{\alpha}-\tau )^{2/\alpha} \le \rho^{2}-\tau^{2/\alpha}$,
  the domain of the $\rho$-integration is further restricted to
  \begin{equation}\label{intrange}
    \bigg\{\rho\ge\frac{|\xi|^2+\tau^{2/\alpha}}{2|\xi|} \bigg\}.
  \end{equation}
  We compute the integral over $b$ as
  \begin{equation}\label{intdelta}
    \int_{-1}^1 \delta\big(
    g^{\tau,\xi}_{\rho}(b)
    \big)
    db
    =\big|\frac{\mathrm{d}}{\mathrm{d}b}g^{\tau,\xi}_{\rho} (b_{\rho}^{\tau,\xi})\big|^{-1}=(2\alpha|\xi|\rho(\rho^\alpha-\tau)^{\frac{2(\alpha-1)}{\alpha}})^{-1},
  \end{equation}
  where $b_{\rho}^{\tau,\xi}$ is the value in \eqref{zeroofdelta}.
  With \eqref{intrange} and \eqref{intdelta}, we finally obtain
  \begin{align*}
    I(\phi,\psi)(\tau,\xi)=4\pi
    \int_{a(\tau,|\xi|)}^{\infty}
    \phi(\rho)\psi(\omega(\tau,\rho))
    \frac{\rho(\rho^{\alpha}-\tau)}{2\alpha |\xi|(\rho^\alpha-\tau)^{\frac{2(\alpha-1)}{\alpha}}}
    d\rho,
  \end{align*}
  which reduces to the desired form.
\end{proof}

\begin{pro}\label{pro:bil-v2}
  Let $1\leq \alpha\leq 2$.  Consider $u^{+}(t)=S(t)f$ and
  $v^{-}(t)=S(-t)g$, where f and g are radial.  Then for any $\mu>0$
  and $\lambda_1\ge\lambda_2>0$, we have
  \begin{equation}\label{ineq:uv}
    \|P_\mu(u_{\lambda_1}^{+}v_{\lambda_2}^{-})\|
    _{L_{t,x}^2(\mathbb{R}^{1+3})}
    \lesssim
    \mu
    \lambda_2^\frac{1-\alpha}{2}
    \|f_{\lambda_1}\|_{L_x^{2}(\mathbb{R}^{3})} 
    \|g_{\lambda_2}\|_{L_x^{2}(\mathbb{R}^{3})} 
  \end{equation}
\end{pro}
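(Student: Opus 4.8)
The plan is to work on the space-time Fourier side: use the radiality of $f$ and $g$ to express (the modulus of) the space-time Fourier transform of $u^{+}_{\lambda_1}v^{-}_{\lambda_2}$ through the explicit integral $I$ of Lemma~\ref{lm:eq:I}, and then estimate the resulting $L^2_{\tau,\xi}$-norm by Cauchy--Schwarz. Write $f_{\lambda_1}=P_{\lambda_1}f$ and $g_{\lambda_2}=P_{\lambda_2}g$, so that $u^{+}_{\lambda_1}=\mathbf Sf_{\lambda_1}$ and $v^{-}_{\lambda_2}(t)=S(-t)g_{\lambda_2}$ have spatial Fourier support in $\{|\eta|\sim\lambda_1\}$, resp.\ $\{|\eta|\sim\lambda_2\}$; since the product is then band-limited to $\{|\xi|\ls\lambda_1\}$, we may assume $\mu\ls\lambda_1$, as otherwise \eqref{ineq:uv} is trivial. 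Writing $u^{+}_{\lambda_1}$ and $v^{-}_{\lambda_2}$ as superpositions of plane waves and taking the Fourier transform in $(t,x)$ one finds, for a suitable sign convention,
\[
\mathcal F_{t,x}\big(u^{+}_{\lambda_1}v^{-}_{\lambda_2}\big)(\tau,\xi)=c\int\widehat{f_{\lambda_1}}(\eta)\,\widehat{g_{\lambda_2}}(\xi-\eta)\,\delta\big(\tau-|\eta|^{\alpha}+|\xi-\eta|^{\alpha}\big)\,d\eta,
\]
with $c$ an absolute constant. Since $f,g$ are radial, $\widehat{f_{\lambda_1}},\widehat{g_{\lambda_2}}$ are radial, so $\phi:=|\widehat{f_{\lambda_1}}|$ and $\psi:=|\widehat{g_{\lambda_2}}|$ are nonnegative functions of the radial variable supported in $[\tfrac{\lambda_1}{2},2\lambda_1]$, resp.\ $[\tfrac{\lambda_2}{2},2\lambda_2]$, with $\|\phi(\rho)\rho\|_{L^2_\rho}\sim\|f_{\lambda_1}\|_{L^2_x}$ and $\|\psi(\rho)\rho\|_{L^2_\rho}\sim\|g_{\lambda_2}\|_{L^2_x}$. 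Hence, by Plancherel and the triangle inequality,
\[
\|P_\mu(u^{+}_{\lambda_1}v^{-}_{\lambda_2})\|_{L^2_{t,x}}^2\ \ls\ \int_{|\xi|\sim\mu}\int_{\R}I(\phi,\psi)(\tau,\xi)^2\,d\tau\,d\xi,
\]
and by the symmetry $I(\phi,\psi)(\tau,\xi)=I(\psi,\phi)(-\tau,\xi)$ --- together with the fact that the bound I will obtain is symmetric under exchanging $(\lambda_1,\phi)\leftrightarrow(\lambda_2,\psi)$ --- it suffices to estimate the contribution of $\tau\ge0$.

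Next I would establish a pointwise bound on $I(\phi,\psi)(\tau,\xi)$ for $\tau\ge0$. By \eqref{vanish} it vanishes unless $0\le\tau\ls\lambda_1^{\alpha-1}|\xi|$, and in that range Lemma~\ref{lm:eq:I} gives
\[
I(\phi,\psi)(\tau,\xi)=\frac{2\pi}{\alpha|\xi|}\int_{a(\tau,|\xi|)}^{\infty}\phi(\rho)\,\psi(\omega(\tau,\rho))\,\omega(\tau,\rho)^{2-\alpha}\,\rho\,d\rho,\qquad\omega(\tau,\rho)=(\rho^{\alpha}-\tau)^{1/\alpha}.
\]
I would apply Cauchy--Schwarz to the $\rho$-integral after writing the integrand as $[\phi(\rho)\rho]\cdot[\psi(\omega(\tau,\rho))\,\omega(\tau,\rho)^{2-\alpha}\mathbf 1_{\supp\phi}(\rho)]$: the first factor integrates to $\ls\|f_{\lambda_1}\|_{L^2_x}$, while in the second factor one now has $\rho\sim\lambda_1$. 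Changing variables $\rho\mapsto\omega=\omega(\tau,\rho)$ (with $d\rho=\rho^{1-\alpha}\omega^{\alpha-1}\,d\omega$, which on $\supp\phi$ is $\sim\lambda_1^{1-\alpha}\omega^{\alpha-1}\,d\omega$) and using $\omega\sim\lambda_2$ on $\supp\psi$ one gets
\[
\int_{\supp\phi}\psi(\omega(\tau,\rho))^2\,\omega(\tau,\rho)^{2(2-\alpha)}\,d\rho\ \sim\ \lambda_1^{1-\alpha}\int\psi(\omega)^2\,\omega^{3-\alpha}\,d\omega\ \sim\ (\lambda_1\lambda_2)^{1-\alpha}\,\|g_{\lambda_2}\|_{L^2_x}^2.
\]
Combining, one obtains uniformly in $\tau$ in its support
\[
I(\phi,\psi)(\tau,\xi)\ \ls\ \frac{(\lambda_1\lambda_2)^{(1-\alpha)/2}}{|\xi|}\,\|f_{\lambda_1}\|_{L^2_x}\,\|g_{\lambda_2}\|_{L^2_x}.
\]

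Finally I would insert this into the $L^2_{\tau,\xi}$-integral. Since $I(\phi,\psi)(\cdot,\xi)$ is supported in $\{|\tau|\ls\lambda_1^{\alpha-1}|\xi|\}$, the $\tau$-integration costs a factor $\ls\lambda_1^{\alpha-1}|\xi|$, and since $|\xi|^{-1}$ is locally integrable on $\R^3$ one has $\int_{|\xi|\sim\mu}|\xi|^{-1}\,d\xi\sim\mu^2$; therefore
\[
\int_{|\xi|\sim\mu}\int_{\R}I(\phi,\psi)(\tau,\xi)^2\,d\tau\,d\xi\ \ls\ (\lambda_1\lambda_2)^{1-\alpha}\|f_{\lambda_1}\|_{L^2_x}^2\|g_{\lambda_2}\|_{L^2_x}^2\int_{|\xi|\sim\mu}\frac{\lambda_1^{\alpha-1}|\xi|}{|\xi|^2}\,d\xi\ \sim\ \mu^2\,\lambda_2^{1-\alpha}\,\|f_{\lambda_1}\|_{L^2_x}^2\|g_{\lambda_2}\|_{L^2_x}^2,
\]
where the powers of $\lambda_1$ cancel; taking square roots yields \eqref{ineq:uv}. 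I expect the main obstacle to be the choice of the Cauchy--Schwarz splitting in the middle step: it must recover exactly $\|f_{\lambda_1}\|_{L^2_x}\|g_{\lambda_2}\|_{L^2_x}$ while leaving residual weights whose $\tau$- and $\xi$-integrals --- over the range $|\tau|\ls\lambda_1^{\alpha-1}\mu$ dictated by \eqref{vanish} and over $|\xi|\sim\mu$ in three dimensions --- combine so that all $\lambda_1$-dependence disappears and only $\mu\,\lambda_2^{(1-\alpha)/2}$ survives; here the change of variables $\rho\mapsto\omega$, with its Jacobian $\rho^{1-\alpha}\omega^{\alpha-1}$, is exactly what lets both frequency localizations be exploited. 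A routine technical matter is that $\phi$ and $\psi$ need not be smooth, which is handled by applying Lemma~\ref{lm:eq:I} to smooth approximants --- its proof rests only on the coarea formula and is stable under such approximation.
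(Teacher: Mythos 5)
Your argument is correct and follows essentially the same route as the paper: pass to the space-time Fourier side, use Lemma~\ref{lm:eq:I} to express the transform as $I(\phi,\psi)$, apply Cauchy--Schwarz in $\rho$, change variables $\rho\mapsto\omega(\tau,\rho)$ with Jacobian $(\omega/\rho)^{\alpha-1}$, and then use the support constraint $|\tau|\ls\lambda_1^{\alpha-1}|\xi|$ from \eqref{vanish} together with $|\xi|\sim\mu$ to close. The only cosmetic differences are that you flip the roles of $\phi,\psi$ relative to the paper's notation (which makes your ``$\tau\ge 0$'' case the dominant one, corresponding to the paper's $I_2$), you invoke the symmetry $I(\phi,\psi)(\tau,\xi)=I(\psi,\phi)(-\tau,\xi)$ instead of explicitly writing out both sign ranges as $I_1$ and $I_2$, and you extract a pointwise bound on $I(\tau,\xi)$ before integrating rather than applying Cauchy--Schwarz inside the $L^2_{\tau,\xi}$-integral; these are equivalent. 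Two small points worth flagging: your displayed formula for $\mathcal F_{t,x}(u^+_{\lambda_1}v^-_{\lambda_2})$ has the sign of the delta argument opposite to what the paper's conventions $\widehat{S(t)\varphi}=e^{-it|\xi|^\alpha}\widehat\varphi$ and $\mathcal F_t f=\int e^{-it\tau}f\,dt$ give (it should be $\delta(\tau+|\eta|^\alpha-|\xi-\eta|^\alpha)$ with $f$ placed at $\eta$); your hedge ``for a suitable sign convention'' covers this, and the $L^2_\tau$-norm is unaffected. Your closing remark about approximating $\phi,\psi$ by smooth cutoffs to invoke Lemma~\ref{lm:eq:I} is a valid observation of a point the paper passes over silently.
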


\begin{proof}
  The Fourier transform of a radial function is a radial function, and
  we may assume $\widehat{f},\widehat{g}\geq 0$.  We denote
  $\widehat{f}\chi_{\lambda_1}, \widehat{g}\chi_{\lambda_2}$ by
  $\psi_{\lambda_1}$, $\phi_{\lambda_2}$, respectively, and compute
  the space-time Fourier transform
  \begin{align*}
    &\mathcal{F}_{t,x} 
      \{P_{\mu}(u_{\lambda_1}^{+}v_{\lambda_2}^{-}) \}
      (\tau,\xi)   \\
    ={}&
         \int_{\mathbb{R}}e^{-it\tau}
         \chi_\mu(|\xi|)
         \int_{\mathbb{R}^{3}}
         \chi_{\lambda_1}(|\xi-\eta|)e^{-it|\xi-\eta|^\alpha}
         \hat{f}(\xi-\eta)
         \chi_{\lambda_2}(|\eta|)e^{it|\eta|^\alpha}
         \hat{g}(\eta)
         d\eta dt  \\
    ={}&\chi_\mu(|\xi|)
         \int_{\mathbb{R}^{3}}
         \psi_{\lambda_1}(|\xi-\eta|)
         \phi_{\lambda_2}(|\eta|)
         \delta(\tau-|\eta|^{\alpha}
         +|\xi-\eta|^{\alpha})
         d\eta.
  \end{align*}
  For $0\le\tau\le C_\alpha \lambda_1^{\alpha-1}\mu$,
  Lemma~\ref{lm:eq:I} implies
  \[
  \mathcal{F}_{t,x} \{P_{\mu}(u_{\lambda_1}^{+}v_{\lambda_2}^{-}) \}
  (\tau,\xi) \approx \chi_\mu(\xi)\frac{1}{|\xi|}
  \int_{a(\tau,|\xi|)}^{\infty}
  \phi_{\lambda_2}(\rho)\psi_{\lambda_1}(\omega(\tau,\rho))
  \omega(\tau,\rho) ^{2-\alpha} \rho d\rho,
  \]
  while for $\tau> C_\alpha \lambda_1^{\alpha-1}\mu$ there is no
  contribution.  If $-C_\alpha \lambda_1^{\alpha-1}\mu\le \tau\le 0$,
  we similarly obtain
  \begin{align*}
  &\mathcal{F}_{t,x} \{P_{\mu}(u_{\lambda_1}^{+}v_{\lambda_2}^{-}) \}
  (\tau,\xi) \\
\approx{}& \chi_\mu(\xi)\frac{1}{|\xi|}
  \int_{a(-\tau,|\xi|)}^{\infty}\phi_{\lambda_2}(\omega(-\tau,\rho))
  \psi_{\lambda_1}(\rho) \omega(-\tau,\rho) ^{2-\alpha} \rho d\rho,
  \end{align*}
  while for $\tau< -C_\alpha \lambda_1^{\alpha-1}\mu$ there is no
  contribution.  Hence,
  \[
  \|P_\mu(u_{\lambda_1}^{+}v_{\lambda_2^{-}})\|_{L_{t,x}^2(\mathbb{R}^{1+3})}^2
  := I_1 + I_2 ,\] where, by Plancherel's theorem,
  \begin{align*}
    I_1&\approx
         \int_{\mathbb{R}^3}
         \int\limits_{0}^{C_\alpha \lambda_1^{\alpha-1}\mu}
         \frac{\chi_\mu^2(|\xi|)}{|\xi|^2}
         \bigg|
         \int_{a(\tau,|\xi|)}^{\infty}
         [\rho\phi_{\lambda_2}(\rho)]
         [	\omega(\tau,\rho)
         ^{2-\alpha}
         \psi_{\lambda_1}(\omega(\tau,\rho))]
         d\rho 
         \bigg|^2
         d\tau d\xi ,\\
    I_2 &\approx  
          \int_{\mathbb{R}^3}
          \int\limits_{0}^{C_\alpha \lambda_1^{\alpha-1}\mu}
          \frac{\chi_\mu^2(|\xi|)}{|\xi|^2}
          \bigg|
          \int_{a(\tau,|\xi|)}^{\infty}
          [\rho\psi_{\lambda_1}(\rho)]
          [	\omega(\tau,\rho)
          ^{2-\alpha}
          \phi_{\lambda_2}(\omega(\tau,\rho))]
          d\rho 
          \bigg|^2
          d\tau d\xi.
  \end{align*}
  In polar coordinates
  $\xi\rightarrow (r,\theta) \in [0,\infty)\times S^2$, the first term
  is
  \begin{align*}
    I_1&\approx
         \int_{0}^\infty \chi_{\mu}^2(r)
         \int_{0}^{C_\alpha\lambda_1^{\alpha-1}\mu}
         \bigg|
         \int_{a(\tau,r)}^{\infty}
         [\rho\phi_{\lambda_2}(\rho)]
         [	\omega(\tau,\rho)
         ^{2-\alpha}
         \psi_{\lambda_1}(\omega(\tau,\rho))
         ]
         d\rho 
         \bigg|^2
         d\tau dr.
  \end{align*}
  The Cauchy-Schwarz inequality implies
  \begin{align*}
    &\bigg|
      \int_{a(\tau,|\xi|)}^{\infty}
      [\rho\phi_{\lambda_2}(\rho)]
      [	\omega(\tau,\rho)
      ^{2-\alpha}
      \psi_{\lambda_1}(\omega(\tau,\rho))
      ]
      d\rho 
      \bigg|^2\\
    \lesssim{}& \bigg(
                \int_{\mathbb{R}}
                |\phi_{\lambda_2}(\rho)\rho|^2 d\rho
                \bigg)\bigg(
                \int_{\mathbb{R}}
                |	\omega(\tau,\rho)^{2-\alpha}
                \psi_{\lambda_1}(\omega(\tau,\rho))
                \widetilde{\chi}_{\lambda_2}(\rho)
                |^2 d\rho
                \bigg)\\
    \lesssim{}&\|g_{\lambda_2}\|_{L^2(\R^3)}^2\bigg(
                \int_{\mathbb{R}}
                |	\sigma^{2-\alpha}
                \psi_{\lambda_1}(\sigma)|^2 \big(\frac{\lambda_1}{\lambda_2}\big)^{\alpha-1}
                d\sigma
                \bigg).
  \end{align*}
  Here, we used the change of variables $\sigma=\omega(\tau,\rho)$,
  hence $\sigma^{\alpha}=\rho^{\alpha}-\tau $ and
  $d\rho= (\frac{\sigma}{\rho})^{\alpha-1} d\sigma$, and in the domain
  of integration we have
  $|\frac{\sigma^{\alpha-1}}{\rho^{\alpha-1}} | \lesssim
  (\frac{\lambda_1}{\lambda_2})^{\alpha-1}$.  We obtain
  \begin{align*}
    I_1&
         \lesssim
         \mu
         \|g_{\lambda_2}\|_{L_x^{2}(\mathbb{R}^{3})}^2
         \lambda_1^{\alpha-1}\mu
         \big(\frac{1}{\lambda_1\lambda_2}\big)^{\alpha-1}
         \|f_{\lambda_1}\|_{L_x^{2}(\mathbb{R}^{3})}^2 \\
       &\lesssim
         \mu^2 \lambda_2^{1-\alpha}
         \|g_{\lambda_2}\|_{L_x^{2}(\mathbb{R}^{3})}^2
         \|f_{\lambda_1}\|_{L_x^{2}(\mathbb{R}^{3})}^2.
  \end{align*}
  Concerning $I_2$, we obtain
  \begin{align*}
    I_2&
         \lesssim
         \mu\|f_{\lambda_1}\|_{L_x^{2}(\mathbb{R}^{3})}^2
         \int_{0}^{C_\alpha\lambda_1^{\alpha-1}\mu}
         \int_{\mathbb{R}}
         |	\omega(\tau,\rho)^{2-\alpha}
         \phi_{\lambda_2}(\omega(\tau,\rho))
         |^2 d\rho d\tau
  \end{align*}
  along the same lines. Again,
  \[I_2 \lesssim \mu^2 \lambda_2^{1-\alpha}
  \|g_{\lambda_2}\|_{L_x^{2}(\mathbb{R}^{3})}^2
  \|f_{\lambda_1}\|_{L_x^{2}(\mathbb{R}^{3})}^2,\]
  by the same change of variables as above.
\end{proof}

\begin{cor}\label{cor:bil-v2} Let $1\leq \alpha\leq 2$.
  For all dyadic $\mu\ls \lambda_1\sim\lambda_2$ and spatially radial
  functions $u_j=S(\cdot)\varphi_j $, we have
  \begin{equation}\label{eq:bil-rad}
    \|P_{\leq \mu} (P_{\lambda_1} u_1 \overline{P_{\lambda_2} u_2})\|_{L^2(\R^{1+3})}\ls \mu^{\frac{3-\alpha}{2}}\Big(\frac{\mu}{\lambda_1}\Big)^{\frac{\alpha-1}{2}}\|P_{\lambda_1} \varphi_1 \|_{L^2(\R^3)}\|P_{\lambda_2} \varphi_2\|_{L^2(\R^3)}.
  \end{equation}
\end{cor}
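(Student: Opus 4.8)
The plan is to obtain \eqref{eq:bil-rad} from Proposition~\ref{pro:bil-v2} by splitting the output frequency projection $P_{\leq\mu}$ into Littlewood--Paley pieces and summing a geometric series. The first step is to put $P_{\lambda_1}u_1\,\overline{P_{\lambda_2}u_2}$ into the shape occurring in Proposition~\ref{pro:bil-v2}. From $\widehat{S(t)\varphi}(\xi)=e^{-it|\xi|^\alpha}\widehat\varphi(\xi)$ one checks directly that $\overline{S(t)\varphi_2}=S(-t)\overline{\varphi_2}$, and since $\chi_{\lambda_2}$ is even, $P_{\lambda_2}$ commutes with complex conjugation; hence $\overline{P_{\lambda_2}u_2}=P_{\lambda_2}S(-\cdot)\overline{\varphi_2}$, and $\overline{\varphi_2}$ is radial when $\varphi_2$ is. Taking a global complex conjugate (which preserves $L^2$-norms and commutes with each $P_\nu$, and interchanges the two factors) we may assume $\lambda_1\geq\lambda_2$. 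Setting $f:=\varphi_1$, $g:=\overline{\varphi_2}$, $u^+:=S(\cdot)f$, $v^-:=S(-\cdot)g$, we then have $P_{\lambda_1}u_1\,\overline{P_{\lambda_2}u_2}=u^+_{\lambda_1}v^-_{\lambda_2}$ in the notation of Proposition~\ref{pro:bil-v2}, with $\|P_{\lambda_2}g\|_{L^2}=\|P_{\lambda_2}\varphi_2\|_{L^2}$.

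By the definition of $P_{\leq\mu}$ in Subsection~\ref{subsec:not} we have $\chi_{\leq\mu}=\sum_{\nu\leq\mu}\chi_\nu$, so $P_{\leq\mu}h=\sum_{\nu\leq\mu}P_\nu h$ and the triangle inequality gives
\[
\|P_{\leq\mu}(u^+_{\lambda_1}v^-_{\lambda_2})\|_{L^2(\R^{1+3})}\leq\sum_{\nu\in 2^\Z,\,\nu\leq\mu}\|P_\nu(u^+_{\lambda_1}v^-_{\lambda_2})\|_{L^2(\R^{1+3})}.
\]
To each summand I would apply Proposition~\ref{pro:bil-v2} with $\mu$ replaced by $\nu$; this is legitimate for every $\nu>0$, and in particular shows that each piece lies in $L^2_{t,x}$, so that the partial sums converge in $L^2_{t,x}$ and their limit is $P_{\leq\mu}(u^+_{\lambda_1}v^-_{\lambda_2})$. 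Using $\lambda_1\sim\lambda_2$ to replace $\lambda_2^{\frac{1-\alpha}{2}}$ by $\lambda_1^{\frac{1-\alpha}{2}}$, the proposition yields
\[
\|P_\nu(u^+_{\lambda_1}v^-_{\lambda_2})\|_{L^2(\R^{1+3})}\ls\nu\,\lambda_1^{\frac{1-\alpha}{2}}\|P_{\lambda_1}\varphi_1\|_{L^2(\R^3)}\|P_{\lambda_2}\varphi_2\|_{L^2(\R^3)}.
\]

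Finally, since $\sum_{\nu\in 2^\Z,\,\nu\leq\mu}\nu$ is a convergent geometric series bounded by a constant times $\mu$, summing the previous display gives
\[
\|P_{\leq\mu}(u^+_{\lambda_1}v^-_{\lambda_2})\|_{L^2(\R^{1+3})}\ls\mu\,\lambda_1^{\frac{1-\alpha}{2}}\|P_{\lambda_1}\varphi_1\|_{L^2(\R^3)}\|P_{\lambda_2}\varphi_2\|_{L^2(\R^3)},
\]
and the elementary identity $\mu\,\lambda_1^{\frac{1-\alpha}{2}}=\mu^{\frac{3-\alpha}{2}}\bigl(\mu/\lambda_1\bigr)^{\frac{\alpha-1}{2}}$ rewrites this as \eqref{eq:bil-rad}. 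There is no genuine obstacle here: the only point deserving attention is that the per-piece estimate carries the factor $\nu$, which degenerates as $\nu\to0$ but for exactly that reason makes the dyadic sum over low output frequencies summable with total $\ls\mu$; the resulting gain $(\mu/\lambda_1)^{\frac{\alpha-1}{2}}\ls1$ (using $\alpha\geq1$ and $\mu\ls\lambda_1$) reflects the transversality exploited in Lemma~\ref{lm:eq:I}.
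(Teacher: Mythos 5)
Your proof is correct and follows exactly the route the paper indicates: decompose $P_{\leq\mu}$ into dyadic pieces, apply Proposition~\ref{pro:bil-v2} to each, and sum the resulting geometric series. You have simply filled in the details (the reduction $\overline{P_{\lambda_2}u_2}=P_{\lambda_2}S(-\cdot)\overline{\varphi_2}$, the symmetry to assume $\lambda_1\geq\lambda_2$, and the algebraic rewriting of the constant) that the paper leaves to the reader.
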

\begin{proof}
  This follows by dyadic summation over $\mu'\leq \mu$ from
  Proposition \ref{pro:bil-v2}.
\end{proof}
\subsection{Transference}\label{subsec:trans}
Let $1\leq p<\infty$. We call a finite set $\{t_0,\ldots,t_K\}$ a partition if $-\infty<t_0<t_1<\ldots<t_K\leq \infty$, and denote the set of all partitions by $\mathcal{T}$. A corresponding step-function $a:\R\to L^2(\R^3)$ is called $U^p_{\mathbf{S}}$-atom if
\[
a(t)=\sum_{k=1}^K \mathbf{1}_{[t_{k-1},t_k)} (t)S(t)\varphi_k, \quad \sum_{k=1}^K\|\varphi_k\|_{L^2(\R^3)}^p=1,\quad \{t_0,\ldots,t_K\}\in \mathcal{T},
\]
and $U^p_{\mathbf{S}}$ is the atomic space. Further, let $V^p_{\mathbf{S}}$ be the space of all right-continuous $v:\R \to L^2(\R^3)$ satisfying
\[
\|v\|_{V^p_{\mathbf{S}}}:=\sup_{\{t_0,\ldots,t_K\}\in \mathcal{T}}\big(\sum_{k=1}^K\|S(-t_k)v(t_k)-S(-t_{k-1})v(t_{k-1})\|_{L^2(\R^3)}^p\big)^{\frac1p}.
\]
with the convention $S(-t_K)v(t_K)=0$ if $t_K=\infty$.
For the theory of $U^p_{\mathbf{S}}$ and $V^p_{\mathbf{S}}$, see e.g.\
\cite{HHetal-09,HHetal-10,KTetal-14}.
For $s\in \R$ let
\[
\|u\|_{X^s}=\Big(\sum_{\lambda \in 2^\Z} \lambda^{2s}\|P_\lambda
u\|_{V^2_\mathbf{S}}^2\Big)^{\frac12}.
\]
By the atomic structure of $U^2_{\mathbf{S}}$, estimates in $L^2$ for
free solutions transfer to $U^2_{\mathbf{S}}$-functions, hence to
$V^p_{\mathbf{S}}$ for $p<2$. However, transference to
$V^2_{\mathbf{S}}$ does not follow from the general theory of these
spaces. Nevertheless, we prove below that in case of the bilinear estimate of the previous
section it does hold true. This might also have applications in the case
$\alpha=1$, and the proof applies to certain other multilinear
estimates.

\begin{pro}\label{pro:bil-rad-v2} Let $1\leq \alpha\leq 2$.
  For all dyadic $\mu\ls \lambda_1\sim\lambda_2$ and spatially radial
  functions $u_1,u_2\in V^2_{\mathbf{S}}$, we have
  \begin{equation}\label{eq:bil-rad-v2}
    \|P_{\leq \mu} (P_{\lambda_1} u_1 \overline{P_{\lambda_2} u_2})\|_{L^2(\R^{1+3})}\ls \mu^{\frac{3-\alpha}{2}}\Big(\frac{\mu}{\lambda_1 }\Big)^{\frac{\alpha-1}{2}}\|P_{\lambda_1} u_1\|_{V^2_{\mathbf{S}}}\|P_{\lambda_2} u_2\|_{V^2_{\mathbf{S}}}.
  \end{equation}
\end{pro}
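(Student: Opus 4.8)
The plan is to \emph{not} transfer the estimate term-by-term, but to reformulate $\|P_{\le\mu}(P_{\lambda_1}u_1\overline{P_{\lambda_2}u_2})\|_{L^2(\R^{1+3})}^2$ as a time-integral of a nonnegative quadratic (in fact quartic) form evaluated along the two paths $t\mapsto S(-t)u_i(t)$, and then to feed the free-solution bound of Corollary~\ref{cor:bil-v2} into a Young/Stieltjes-type lemma, applied once in each factor. Because that lemma uses only positivity and the \emph{quadratic} (rather than linear or cubic) variation, it costs no logarithm, and the argument is insensitive to whether $\alpha>1$; this matches the robustness claimed in the remarks above.

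\emph{Reformulation and reduction.} Since the bilinear map acts pointwise in $t$, writing $v_i(t):=S(-t)(P_{\lambda_i}u_i)(t)$, which satisfies $\|v_i\|_{V^2}=\|P_{\lambda_i}u_i\|_{V^2_{\mathbf S}}$ and $(P_{\lambda_i}u_i)(t)=S(t)v_i(t)$, one has
\[
\big\|P_{\le\mu}(P_{\lambda_1}u_1\overline{P_{\lambda_2}u_2})\big\|_{L^2(\R^{1+3})}^2=\int_\R\Phi_t\big(v_1(t),v_2(t)\big)\,dt,\qquad \Phi_t(a,b):=\big\|P_{\le\mu}(P_{\lambda_1}S(t)a\,\overline{P_{\lambda_2}S(t)b})\big\|_{L^2_x}^2.
\]
For fixed $t,b$, the map $a\mapsto\Phi_t(a,b)=\langle A_b(t)a,a\rangle$ is a nonnegative quadratic form, with $A_b(t)=T_{t,b}^\ast T_{t,b}\ge0$ for $T_{t,b}=P_{\le\mu}M_{P_{\lambda_2}S(t)b}P_{\lambda_1}S(t)$; and Corollary~\ref{cor:bil-v2}, squared and applied to $u_j=S(\cdot)a,S(\cdot)b$, says exactly $\big\|\int_\R A_b(t)\,dt\big\|\lesssim \mu^{3-\alpha}(\mu/\lambda_1)^{\alpha-1}\|b\|_{L^2}^2$. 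Granting the lemma below, applied with $A(\cdot)=A_b(\cdot)$ and $w=v_1$, gives $\int_\R\Phi_t(v_1(t),b)\,dt\lesssim\mu^{3-\alpha}(\mu/\lambda_1)^{\alpha-1}\|v_1\|_{V^2}^2\|b\|_{L^2}^2$ for every $b$ and every path $v_1$. Now freeze $v_1$: for fixed $t$, $b\mapsto\Phi_t(v_1(t),b)=\langle B(t)b,b\rangle$ is again a nonnegative quadratic form, and the previous line gives $\big\|\int_\R B(t)\,dt\big\|\lesssim\mu^{3-\alpha}(\mu/\lambda_1)^{\alpha-1}\|v_1\|_{V^2}^2$; a second application of the lemma, with $w=v_2$, yields $\int_\R\Phi_t(v_1(t),v_2(t))\,dt\lesssim\mu^{3-\alpha}(\mu/\lambda_1)^{\alpha-1}\|v_1\|_{V^2}^2\|v_2\|_{V^2}^2$, which is \eqref{eq:bil-rad-v2} after taking square roots. (The measurability and weak integrability of $t\mapsto A_b(t),B(t)$ are routine, these being strongly continuous families of positive operators.)

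\emph{The key lemma, and the main obstacle.} The lemma to be proved is: if $A:\R\to\mathcal{L}(L^2_x)$ with $A(t)\ge0$ is weakly measurable, $\mathcal A:=\int_\R A(t)\,dt$ is bounded, and $w:\R\to L^2_x$ is regulated with $\|w\|_{V^2}<\infty$, then $\int_\R\langle A(t)w(t),w(t)\rangle\,dt\lesssim\|\mathcal A\|\,\|w\|_{V^2}^2$. This is the crux. The scalar case is immediate from $\langle A(t)w(t),w(t)\rangle\le\|w\|_{L^\infty}^2\langle A(t)\cdot,\cdot\rangle$ together with $\|w\|_{L^\infty}\le\|w\|_{V^2}$; the operator case is a borderline Young/Love--Young estimate, and the expected proof is an adaptive dyadic decomposition of the time axis — split first around the points carrying the largest jumps of $w$, write $w(t)=w(s_\ast)+(\text{residual})$ on each piece $I$, bound the frozen contribution by $\big\|\int_I A\big\|\,\|w\|_{V^2(I)}^2\le\|\mathcal A\|\,\|w\|_{V^2(I)}^2$, and iterate on the residuals, using at each generation that the squared $2$-variation is additive over a partition into disjoint intervals. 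The delicate point is controlling the iteration, i.e.\ showing that the residual $2$-variations on the sub-intervals of a given generation genuinely decay, so that the geometric-looking sum converges rather than one merely bounding each generation separately; this is exactly where the $V^2$-norm (as opposed to, say, $U^2$) is used essentially, which is why the general theory does not suffice here and why the same scheme extends to other $L^2$-valued bilinear — and, via further applications, multilinear — estimates.
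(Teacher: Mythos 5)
Your approach is genuinely different from the paper's, and it has a real gap at exactly the point you flag as ``the crux.'' Let me explain both.

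\textbf{What the paper does.} The paper does not transfer through an abstract positive-operator lemma. After polarizing ($\Real(v_1\overline{v_2})=\tfrac14(|w_+|^2-|w_-|^2)$, with $w_\pm=v_1\pm v_2$), the key object is the subadditive, absolutely homogeneous functional
\[
n(f)=\big\|\big(V_\mu * |Pf|^2\big)^{1/2}\big\|_{L^4_x},\qquad V_\mu=(\widecheck{\chi}_{\le 2\mu})^2\ge 0,
\]
for which $\|P_{\le\mu}|Pw|^2\|_{L^2_{t,x}}\approx\|n(w(t))\|_{L^4_t}^2$. The free bound from Corollary~\ref{cor:bil-v2} is then literally an $L^4_t$ estimate $\|n(S(t)\varphi)\|_{L^4_t}\ls C^{1/2}\|\varphi\|_{L^2}$, and the paper observes that an $L^4_t$ estimate for a seminorm transfers \emph{without loss} to $U^4_{\mathbf S}$-atoms (the $\ell^4$ normalization of the atoms matches the $L^4_t$ norm), hence to $U^4_{\mathbf S}$, and finally to $V^2_{\mathbf S}$ via $V^2_{\mathbf S}\hookrightarrow U^4_{\mathbf S}$. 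The whole point of introducing $\Phi=(V_\mu*|Pf|^2)^{1/2}$ (the square root, together with the positive kernel $V_\mu$) is to turn an $L^2_t$-of-a-quadratic into an $L^4_t$-of-a-seminorm, because $L^4_t$ is the exponent at which $V^2$ is subsumed.

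\textbf{Where your proposal breaks.} You rewrite $\|P_{\le\mu}(P_{\lambda_1}u_1\overline{P_{\lambda_2}u_2})\|_{L^2}^2=\int\Phi_t(v_1(t),v_2(t))\,dt$ and reduce to the claim: if $A(t)\ge0$ with $\|\int_{\R}A(t)\,dt\|\le 1$, then $\int\langle A(t)w(t),w(t)\rangle\,dt\ls\|w\|_{V^2}^2$. This is the step that is not substantiated. The reformulation puts you in an $L^2_t$ framework for the quartic quantity; in the $U^p$ calculus the hypothesis $\|\int A\|\le1$ is exactly a $U^2$-type bound for a seminorm $n(a,t)=\|A(t)^{1/2}a\|$ (a $U^2$-atom $\sum\mathbf{1}_{I_k}\varphi_k$ with $\sum\|\varphi_k\|^2\le1$ gives $\sum_k\langle(\int_{I_k}A)\varphi_k,\varphi_k\rangle\le 1$), and it does \emph{not} automatically transfer to $V^2$ without loss: $V^2\not\hookrightarrow U^2$, and the general interpolation machinery gives precisely a logarithmic (or $\eps$) loss. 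This is exactly what the paper's own Remark~\ref{rmk:second-proof} says: the generic route loses $(\mu/\lambda_1)^{-\eps}$. Your lemma would assert a strictly lossless $V^2$-transference for \emph{arbitrary} positive operator families $A(t)$, which is a much stronger statement than what the paper proves and, in this generality, is at best unproven. Your scalar reduction $\|w\|_{L^\infty}\le\|w\|_{V^2}$ is fine, but the operator case is not a perturbation of it (the obstruction is that $\int\|A(t)\|\,dt$ can be far larger than $\|\int A(t)\,dt\|$), and your proof sketch (``split around the largest jumps, freeze, iterate on residuals, control the geometric sum'') is vague at the single step where a genuinely new idea is needed, namely why the iterated residual variations decay summably rather than merely staying bounded at each generation. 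The frozen-contribution bound you write, $\|\int_I A\|\,\|w\|_{V^2(I)}^2$, is also not what freezing gives; freezing $w$ on $I$ at $w(s_*)$ produces $\|\int_I A\|\,\|w(s_*)\|^2$, and relating $\|w(s_*)\|$ to the local variation is where the difficulty lives.

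In short: the paper needs (and uses) the special $L^4_t$-seminorm structure of $n$ to pass to $U^4$ and hence $V^2$ without loss; your reduction discards that structure and replaces it with a general positive-operator lemma whose proof is not given and which, in the stated generality, I do not believe; at minimum it requires a serious argument that you have not supplied. This is a genuine gap.
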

\begin{proof} {\it 1. Step:} Let
  $P=\sum_{\lambda \in F} P_\lambda$, with a finite set $F$ of
  dyadic numbers $\lambda$ of size $\lambda_1\sim\lambda_2$, such that
  $P P_{\lambda_j} =P_{\lambda_j}$ for $j=1,2$. We claim that
  \begin{equation}\label{eq:u4}
    \|P_{\leq \mu} |P w|^2\|_{L^2}\ls  \mu^{\frac{3-\alpha}{2}}\Big(\frac{\mu}{\lambda_1 }\Big)^{\frac{\alpha-1}{2}}\|w\|_{U^4_{\mathbf{S}}}^2
  \end{equation}
  for any $\mu\ls \lambda_1$ and spatially radial
  $w\in U^4_{\mathbf{S}}$. To prove \eqref{eq:u4}, let
  $V_\mu=(\widecheck{\chi}_{\leq 2 \mu})^2$. Then, $V_\mu \geq 0$ and
  we have the pointwise bound
  \[\chi_{\leq \mu}\ls \chi_{\leq 2\mu} \ast \chi_{\leq 2\mu} \ls
  \chi_{\leq 4\mu}\] on the Fourier side, which implies
  \[ \|P_{\leq \mu} |P w|^2 \|_{L^2}\ls \|V_\mu \ast |P
  w|^2\|_{L^2}\ls \|P_{\leq 4\mu} |P w|^2\|_{L^2}.\]
  The quantity
  \[n(f):=\|\Phi(f)\|_{L^4(\R^{3})}, \text{ for }\Phi(f)=\Big(V_\mu
  \ast |P f|^2\Big)^{\frac12},\]
  is subadditive, and
  $\|V_\mu \ast |P w|^2\|_{L^2}=\|n(w(t))\|_{L^4_t}^2$. Indeed,
  \begin{align*}
    \Phi^2(f_1+f_2)(x)={}&\int_{\R^3} V_\mu (x-y)|P f_1(y)+P
                           f_2(y)|^2 dy\\
    \leq{}&\int_{\R^3} V_\mu (x-y)|P f_1(y)+P
            f_2(y)||P f_1(y)| dy\\&{}+\int_{\R^3} V_\mu (x-y)|P f_1(y)+P
                                    f_2(y)||P f_2(y)| dy\\
    \leq{}& \Phi(f_1+f_2)(x)\Phi(f_1)(x)+\Phi(f_1+f_2)(x)\Phi(f_2)(x)
  \end{align*}
  by Cauchy-Schwarz, so $\Phi(f_1+f_2)\leq \Phi(f_1)+\Phi(f_2)$ and
  \[
  n(f_1+f_2)\leq \|\Phi(f_1)+\Phi(f_2)\|_{L^4}\leq n(f_1)+n(f_2)
  \]
  follows. Also, we obviously have $n(c f)=|c|n(f)$ for all $c\in\C$.
  Due to \eqref{eq:bil-rad} we have
  \begin{equation}\label{eq:n-est}
    \|n(S(t)\varphi)\|_{L^4_t}\leq C(\mu,\lambda_1)^{\frac12} \|\varphi\|_{L^2}
  \end{equation}
  for all radial $\varphi \in L^2(\R^3)$, where $C(\mu,\lambda_1)$
  denotes the constant in \eqref{eq:bil-rad}.  Let
  $w\in U^4_{\mathbf{S}}$ be radial with atomic decomposition
  \[
  w=\sum_{j} c_j a_j, \; \text{s.th. }\sum_{j} |c_j| \leq 2
  \|w\|_{U^4_{\mathbf{S}}}, \text{ and radial }
  U^4_{\mathbf{S}}\text{-atoms } a_j.
  \]
  We have
  \begin{equation}\label{eq:n-est-u4}
    \|n(w)\|_{L^4_t}\leq \sum_{j} |c_j| \|n(a_j)\|_{L^4_t} \ls
    C(\mu,\lambda_1)^{\frac12}\|w\|_{U^4_{\mathbf{S}}},
  \end{equation}
  provided that for any $U^4_{\mathbf{S}}$-atom $a$ the estimate
  \[
  \|n(a)\|_{L^4_t}\ls C(\mu,\lambda_1)^{\frac12}
  \]
  holds true. Indeed, let
  $a(t)=\sum_k\mathbf{1}_{I_k}(t) S(t)\varphi_k$, for some partition
  $(I_k)$ of $\R$ and radial $\varphi_k\in L^2(\R^3)$ satisfying
  $\sum_{k}\|\varphi_k\|_{L^2}^4\leq 1$. Then,
  \begin{align*}
    \|n(a)\|_{L^4_t}\leq{}& \Big\|\sum_k\mathbf{1}_{I_k}(t)
                            n(S(t)\varphi_k)\Big\|_{L^4_t}
                            \leq{} \Big(\sum_k  \|n(S(t)\varphi_k)\|_{L^4_t}^4\Big)^{\frac14}\\
    \ls{}& C(\mu,\lambda_1)^{\frac12}\Big(\sum_k
           \|\varphi_k\|_{L^2}^4\Big)^{\frac14}
           \ls{} C(\mu,\lambda_1)^{\frac12},
  \end{align*}
  where we used \eqref{eq:n-est} in the third inequality, which
  completes the proof of \eqref{eq:n-est-u4}. This implies
  \begin{equation}
    \|P_{\leq \mu} |P w|^2 \|_{L^2(\R^{1+3})}\ls \|n(w)\|_{L^4_t}^2\ls
    C(\mu,\lambda_1)\| w\|_{U^4_{\mathbf{S}}}^2,
  \end{equation}
  hence the claim \eqref{eq:u4}.

  {\it 2. Step:} Let $v_{j}:=P_{\lambda_j} u_j$, $j=1,2$. We may
  assume $\|v_j\|_{U^4_{\mathbf{S}}}=1$. The functions
  $w_\pm=v_1\pm v_2$ satisfy $w_\pm=P w_\pm$,
  $\|w_\pm\|_{U^4_{\mathbf{S}}}\ls 1$,
  \[
  \Real(v_1\overline{v_2})=\frac14 \Big(|w_+|^2-|w_-|^2\Big),
  \text{ and }\Imag(v_1\overline{v_2})=\Real(-i v_1\overline{v_2}).\]
The  estimate \eqref{eq:u4} yields
  \begin{align*}
    \|P_{\leq \mu} (v_1\overline{v_2}) \|_{L^2(\R^{1+3})}\ls{}& \|P_{\leq
                                                                \mu} |P w_+|^2 \|_{L^2(\R^{1+3})}+ \|P_{\leq \mu} |Pw_-|^2
                                                                \|_{L^2(\R^{1+3})}\\
    \ls{}& C(\mu,\lambda_1)\big(\|w_+\|_{U^4_{\mathbf{S}}}^2+\|w_-
           \|_{U^4_{\mathbf{S}}}^2\big)\\
    \ls{}& C(\mu,\lambda_1),
  \end{align*}
  which implies
  \[
  \|P_{\leq \mu} (v_1\overline{v_2}) \|_{L^2(\R^{1+3})}\ls
  C(\mu,\lambda_1)
  \|v_1\|_{U^4_{\mathbf{S}}}\|v_2\|_{U^4_{\mathbf{S}}}\ls
  C(\mu,\lambda_1)
  \|v_1\|_{V^2_{\mathbf{S}}}\|v_2\|_{V^2_{\mathbf{S}}},
  \]
  where we used $V^2_{\mathbf{S}}\hookrightarrow U^4_{\mathbf{S}}$,
  see \cite{HHetal-09}.
\end{proof}

\begin{pro}
For any $\mu,\lambda_1,\lambda_2\in 2^\Z$ and $u,v \in U^4_{\mathbf{S}}$,
\begin{equation}\label{eq:bil-str-hoeld}
\|P_\mu (P_{\lambda_1} u \overline{P_{\lambda_2} v})\|_{L^2(\R^{1+3})}
  \ls{} \min\{\mu,\lambda_1,\lambda_2\}^{\frac12} \lambda_1^{\frac{2-\alpha}{4}}\lambda_2^{\frac{2-\alpha}{4}} \|P_{\lambda_1} u\|_{U^4_{\mathbf{S}}}\|P_{\lambda_2} v\|_{U^4_{\mathbf{S}}}.
\end{equation}
\end{pro}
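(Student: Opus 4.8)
The plan is to deduce \eqref{eq:bil-str-hoeld} from the $L^4_tL^3_x$ Strichartz estimate provided by Lemma~\ref{lem:str} (its $q=4$, $r=3$ endpoint in dimension $n=3$, which carries the loss $\theta=\tfrac{2-\alpha}{4}$), combined with Hölder's inequality and two applications of Bernstein's inequality. No radial assumption is needed, since Lemma~\ref{lem:str} holds for general functions.

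\emph{Step 1 (transference to $U^4_{\mathbf S}$).} In $n=3$ the exponents $q=4$, $r=3$ are admissible with $\theta=\tfrac32(2-\alpha)(\tfrac12-\tfrac13)=\tfrac{2-\alpha}{4}$, so Lemma~\ref{lem:str} gives $\|\mathbf S\varphi\|_{L^4_tL^3_x}\ls\|\varphi\|_{H^{(2-\alpha)/4}}$, hence $\|\mathbf S P_\lambda\varphi\|_{L^4_tL^3_x}\ls\lambda^{(2-\alpha)/4}\|P_\lambda\varphi\|_{L^2}$ for frequency-localized data. To transfer this, I would take $u\in U^4_{\mathbf S}$, use that $\widetilde P_\lambda$ acts as the identity on $P_\lambda u$ and commutes with $S(t)$, and pass to an atomic decomposition $P_\lambda u=\sum_j c_j a_j$ with $\sum_j|c_j|\ls\|P_\lambda u\|_{U^4_{\mathbf S}}$: each $\widetilde P_\lambda a_j$ is again a step function built from free solutions with data of frequency $\sim\lambda$, so evaluating $\|\widetilde P_\lambda a_j\|_{L^4_tL^3_x}$ interval by interval and summing (using $\sum_k\|\varphi_k\|_{L^2}^4=1$ in the atom) yields $\|\widetilde P_\lambda a_j\|_{L^4_tL^3_x}\ls\lambda^{(2-\alpha)/4}$. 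Summing in $j$ gives
\[
\|P_\lambda u\|_{L^4_tL^3_x}\ls\lambda^{\frac{2-\alpha}{4}}\|P_\lambda u\|_{U^4_{\mathbf S}}\qquad(\lambda\in 2^\Z,\ u\in U^4_{\mathbf S}).
\]

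\emph{Step 2 (Hölder and Bernstein).} By symmetry I may assume $\lambda_1\ge\lambda_2$, so $\min\{\mu,\lambda_1,\lambda_2\}=\min\{\mu,\lambda_2\}$, and split into two cases. If $\mu\ge\lambda_2$, I discard $P_\mu$, apply Bernstein to the lower-frequency factor ($\|P_{\lambda_2}v\|_{L^6_x}\ls\lambda_2^{1/2}\|P_{\lambda_2}v\|_{L^3_x}$), and Hölder with $\tfrac12=\tfrac13+\tfrac16$ in space and $\tfrac12=\tfrac14+\tfrac14$ in time:
\[
\|P_\mu(P_{\lambda_1}u\,\overline{P_{\lambda_2}v})\|_{L^2_{t,x}}\le\|P_{\lambda_1}u\|_{L^4_tL^3_x}\|P_{\lambda_2}v\|_{L^4_tL^6_x}\ls\lambda_2^{\frac12}\|P_{\lambda_1}u\|_{L^4_tL^3_x}\|P_{\lambda_2}v\|_{L^4_tL^3_x}.
\]
If instead $\mu<\lambda_2$, I apply Bernstein to the outer projector, $\|P_\mu g\|_{L^2_x}\ls\mu^{1/2}\|g\|_{L^{3/2}_x}$, and Hölder with $\tfrac23=\tfrac13+\tfrac13$ in space and $\tfrac12=\tfrac14+\tfrac14$ in time:
\[
\|P_\mu(P_{\lambda_1}u\,\overline{P_{\lambda_2}v})\|_{L^2_{t,x}}\ls\mu^{\frac12}\|P_{\lambda_1}u\|_{L^4_tL^3_x}\|P_{\lambda_2}v\|_{L^4_tL^3_x}.
\]
In both cases the prefactor is $\min\{\mu,\lambda_1,\lambda_2\}^{1/2}$; substituting the bound from Step 1 and recalling $\lambda_1\ge\lambda_2$ produces \eqref{eq:bil-str-hoeld}.

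The individual estimates are routine. The one place that needs care is Step 1: the Fourier cutoff must be moved inside the atomic decomposition so that the Strichartz bound is applied to free solutions with frequency-localized data (yielding the honest power $\lambda^{(2-\alpha)/4}$ rather than an $H^{(2-\alpha)/4}$-norm). The only other subtlety is the bookkeeping in Step 2, namely choosing which projector — the outer $P_\mu$ or the lower Littlewood--Paley block — to feed into Bernstein so as to land exactly $\min\{\mu,\lambda_1,\lambda_2\}^{1/2}$.
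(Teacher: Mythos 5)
Your proof is correct and follows essentially the same route as the paper's: the outer-vs.-factor Bernstein split combined with Hölder and the $L^4_tL^3_x$ Strichartz estimate transferred to $U^4_{\mathbf S}$. The only difference is that you spell out the standard $U^4$-transference step explicitly, whereas the paper invokes it implicitly via Lemma~\ref{lem:str}.
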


\begin{proof}
The Bernstein inequality implies
  \begin{equation*}
    \|P_\mu (P_{\lambda_1} u \overline{P_{\lambda_2} v})\|_{L^2(\R^{1+3})}\ls{} \mu^{\frac12}
                                                                           \|P_{\lambda_1} u \overline{P_{\lambda_2} v}\|_{L^2_t L^{\frac32}_x }
    \ls{}  \mu^{\frac12}
            \|P_{\lambda_1} u\|_{L^4_tL^3_x}\|P_{\lambda_2} v\|_{L^4_tL^3_x}
          \end{equation*}
and Lemma~\ref{lem:str} gives \eqref{eq:bil-str-hoeld} if $\mu\leq \lambda_1,\lambda_2$.
Similarly,
\begin{align*}
    \|P_\mu (P_{\lambda_1} u \overline{P_{\lambda_2} v})\|_{L^2(\R^{1+3})}
\ls{}& \|P_{\lambda_1} u\|_{L^4_tL^6_x}\|P_{\lambda_2} v\|_{L^4_tL^3_x}\\
\ls{}&  \lambda_1^{\frac12}
            \|P_{\lambda_1} u\|_{L^4_tL^3_x}\|P_{\lambda_2} v\|_{L^4_tL^3_x}.
\end{align*}
This concludes the proof because we can interchange the roles of $u$ and $v$.
\end{proof}
\begin{rmk}\label{rmk:second-proof}
In the case $1<\alpha \leq 2$ this gives another simple proof of a result which is slightly weaker than \eqref{eq:bil-rad-v2} but sufficient for our application:
  The obvious $U_{\mathbf{S}}^2$-version of \eqref{eq:bil-rad-v2} (see
  \cite[Prop.\ 2.19]{HHetal-09}) can now be interpolated with \eqref{eq:bil-str-hoeld} in the case $\mu\ls \lambda_1\sim\lambda_2$ via \cite[Prop.\ 2.20]{HHetal-09}, which gives
  \[
  \|P_\mu (P_{\lambda_1} u \overline{P_{\lambda_2} v})\|_{L^2(\R^{1+3})}\ls
  \mu^{\frac{3-\alpha}{2}}\Big(\frac{\mu}{\lambda_1}\Big)^{\frac{\alpha-1}{2}-\eps}\|P_{\lambda_1}
  u\|_{V^2_{\mathbf{S}}}\|P_{\lambda_2} v\|_{V^2_{\mathbf{S}}},
  \]
  for any fixed $\eps>0$.
\end{rmk}
From now on we set $X:=\{u \in X^0\mid u \text{ spatially radial}\}$.
\begin{cor}\label{cor:sum-bil-est}
 Let $1< \alpha\leq 2$.
  For all  $u,v\in X$, we have
  \begin{equation}\label{eq:sum-bil-est}
    \|(-\Delta)^{\frac{\alpha-3}{4}}(u \overline{v})\|_{L^2(\R^{1+3})}\ls \|u\|_{X}\|v\|_{X}.
  \end{equation}
\end{cor}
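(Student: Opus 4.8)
The plan is to decompose dyadically in the output frequency $\mu$ and in the two input frequencies $\lambda_1,\lambda_2$ and to feed the resulting pieces into Proposition~\ref{pro:bil-rad-v2} and estimate~\eqref{eq:bil-str-hoeld}. Since $(-\Delta)^{\frac{\alpha-3}{4}}$ has symbol $|\xi|^{\frac{\alpha-3}{2}}\sim\mu^{\frac{\alpha-3}{2}}$ on $\{|\xi|\sim\mu\}$, and a fixed frequency $\xi$ lies in the support of $\chi_\mu$ for only boundedly many $\mu$, it suffices to prove
\[
\sum_{\mu\in 2^\Z}\mu^{\alpha-3}\,\|P_\mu(u\overline v)\|_{L^2(\R^{1+3})}^2\ls \|u\|_X^2\,\|v\|_X^2 .
\]
Expanding $P_\mu(u\overline v)=\sum_{\lambda_1,\lambda_2}P_\mu(P_{\lambda_1}u\,\overline{P_{\lambda_2}v})$ and inspecting Fourier supports, only two kinds of interaction survive: the high--high-to-low regime $\mu\ls\lambda_1\sim\lambda_2$, and the high--low regime $\lambda_1\sim\mu\gs\lambda_2$ together with its mirror image $\lambda_2\sim\mu\gs\lambda_1$.

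In the high--high regime I would apply Proposition~\ref{pro:bil-rad-v2}, using $P_\mu=P_\mu P_{\leq 4\mu}$ so that the gain survives the passage from $P_{\leq\mu}$ to $P_\mu$; this gives
\[
\mu^{\frac{\alpha-3}{2}}\,\|P_\mu(P_{\lambda_1}u\,\overline{P_{\lambda_2}v})\|_{L^2}
\ls \Big(\tfrac{\mu}{\lambda_1}\Big)^{\frac{\alpha-1}{2}}\,\|P_{\lambda_1}u\|_{V^2_{\mathbf S}}\,\|P_{\lambda_2}v\|_{V^2_{\mathbf S}} .
\]
In the high--low regime I would use \eqref{eq:bil-str-hoeld} together with $V^2_{\mathbf S}\hookrightarrow U^4_{\mathbf S}$: for $\lambda_1\sim\mu\ge\lambda_2$ the minimum in \eqref{eq:bil-str-hoeld} is $\lambda_2$, and the exponents $\tfrac12+\tfrac{2-\alpha}{4}=\tfrac{4-\alpha}{4}$ and $\tfrac{\alpha-3}{2}+\tfrac{2-\alpha}{4}=\tfrac{\alpha-4}{4}$ combine to
\[
\mu^{\frac{\alpha-3}{2}}\,\|P_\mu(P_{\lambda_1}u\,\overline{P_{\lambda_2}v})\|_{L^2}
\ls \Big(\tfrac{\lambda_2}{\mu}\Big)^{\frac{4-\alpha}{4}}\,\|P_{\lambda_1}u\|_{V^2_{\mathbf S}}\,\|P_{\lambda_2}v\|_{V^2_{\mathbf S}} ,
\]
and symmetrically in the mirror case.

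To finish, set $a_\lambda=\|P_\lambda u\|_{V^2_{\mathbf S}}$, $b_\lambda=\|P_\lambda v\|_{V^2_{\mathbf S}}$, so that $\|u\|_X=\|a\|_{\ell^2}$ and $\|v\|_X=\|b\|_{\ell^2}$. Carrying out the (finitely many) $\lambda_2\sim\lambda_1$ summations and relabelling, the high--high contribution to $\mu^{\frac{\alpha-3}{2}}\|P_\mu(u\overline v)\|_{L^2}$ is a discrete convolution of $(a_\lambda b_\lambda)_\lambda$ against the kernel $j\mapsto 2^{-j\frac{\alpha-1}{2}}\mathbf 1_{\{j\ge 0\}}$, which lies in $\ell^1$ exactly because $\alpha>1$; by Young's inequality and $\|ab\|_{\ell^2}\le\|a\|_{\ell^\infty}\|b\|_{\ell^2}\le\|a\|_{\ell^2}\|b\|_{\ell^2}$ this is $\ls\|a\|_{\ell^2}\|b\|_{\ell^2}$ in $\ell^2_\mu$. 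Each high--low contribution factors as $\big(\sum_{\lambda_1\sim\mu}a_{\lambda_1}\big)$ times a discrete convolution of $b$ against the $\ell^1$ kernel $j\mapsto 2^{-j\frac{4-\alpha}{4}}\mathbf 1_{\{j\ge0\}}$ (here only $\alpha<4$ is needed), so again Young's inequality and $\|\cdot\|_{\ell^\infty}\le\|\cdot\|_{\ell^2}$ give a bound $\ls\|a\|_{\ell^2}\|b\|_{\ell^2}$ in $\ell^2_\mu$, and likewise with $a$ and $b$ interchanged. Adding the three contributions and invoking the near-orthogonality noted at the start completes the proof. The only point requiring care — and the sole place the strict inequality $\alpha>1$ enters — is summability of the high--high piece, which hinges on the geometric decay $(\mu/\lambda_1)^{\frac{\alpha-1}{2}}$ furnished by Proposition~\ref{pro:bil-rad-v2}; the high--low regime and the convolution estimates are routine dyadic bookkeeping.
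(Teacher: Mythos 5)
Your proof is correct and follows essentially the same route as the paper: decompose into the high-high-to-low regime ($\mu\ls\lambda_1\sim\lambda_2$), where Proposition~\ref{pro:bil-rad-v2} supplies the crucial gain $(\mu/\lambda_1)^{\frac{\alpha-1}{2}}$ summable precisely when $\alpha>1$, and the two high-low regimes, where \eqref{eq:bil-str-hoeld} together with $V^2_{\mathbf S}\hookrightarrow U^4_{\mathbf S}$ gives the gain $(\lambda_{\min}/\lambda_{\max})^{\frac{4-\alpha}{4}}$, and then close the dyadic summations by a Schur-type argument. The only cosmetic difference is that you invoke Littlewood--Paley near-orthogonality to reduce to an $\ell^2_\mu$ bound before applying Young's inequality, whereas the paper simply uses the triangle inequality to take an $\ell^1$ sum in $\mu$ and then applies Cauchy--Schwarz; both bookkeepings close with the same decay rates.
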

\begin{proof}
We decompose
\begin{align*}
\|(-\Delta)^{\frac{\alpha-3}{4}}(u \overline{v})\|_{L^2(\R^{1+3})}\ls{}& \sum_{\mu,\lambda_1,\lambda_2 \in 2^\Z}\mu^{\frac{\alpha-3}{2}}
\|P_\mu (u_{\lambda_1} \overline{v_{\lambda_2}})\|_{L^2(\R^{1+3})}\\
\ls{}& \Sigma_1+\Sigma_2+\Sigma_3,
\end{align*}
where $\Sigma_1$ is the contribution of $\lambda_1\ll \lambda_2\sim \mu$, $\Sigma_2$ is the contribution of $\lambda_2\ll \lambda_1\sim \mu$, and $\Sigma_3$ is the contribution of $\lambda_1\sim \lambda_2\gs\mu$.
From \eqref{eq:bil-str-hoeld} and Cauchy-Schwarz we obtain
\[
|\Sigma_1|\ls\sum_{\lambda_1\ll \lambda_2}\big(\frac{\lambda_1}{\lambda_2}\big)^{\frac{4-\alpha}{4}}\|P_{\lambda_1}
  u\|_{V^2_{\mathbf{S}}}\|P_{\lambda_2} v\|_{V^2_{\mathbf{S}}}\ls \|u\|_{X}\|v\|_{X}.
\]
Similarly we prove
\[
|\Sigma_2|\ls \|u\|_{X}\|v\|_{X}.
\]
Finally, using \eqref{eq:bil-rad-v2} we obtain
\[
|\Sigma_3|\ls \sum_{\mu\ls \lambda_1\sim\lambda_2}\Big(\frac{\mu}{\lambda_1}\Big)^{\frac{\alpha-1}{2}}\|P_{\lambda_1}
  u\|_{V^2_{\mathbf{S}}}\|P_{\lambda_2} v\|_{V^2_{\mathbf{S}}}\ls \|u\|_{X}\|v\|_{X},
\]
where we have exploited the radiality.
\end{proof}
\section{Proofs of the main results}\label{sec:proofs}
\subsection{Proof of Theorem \ref{thm:main}}\label{subsec:proof-main}
It suffices to consider positive times. We represent the solution  of \eqref{eqn} on $[0,\infty)$ using the Duhamel's formula
\begin{align}
  u(t)&=\mathbf{1}_{[0,\infty)}(t)S(t)\varphi+i\sigma \mathbf{J}(u,u,u)(t), \\
  \mathbf{J}(u_1,u_2,u_3)(t)
      &= \mathbf{1}_{[0,\infty)}(t)\int_0^t S(t-t')\big[\{ |\cdot|^{-\alpha}*(u_1 \overline{u_2}) \} u_3
        \big](t')dt'.
\end{align}

For all $\varphi\in H^s_{rad}$ we immediately have
\begin{equation}\label{eq:lin}
  \|\mathbf{1}_{[0,\infty)}\mathbf{S}\varphi\|_{X^s}
 \ls\|\varphi\|_{H^s}.
\end{equation}

Next, we study the nonlinear part. 
  For all $u_1,u_2,u_3\in X$, we have
  \begin{equation}\label{eq:nonlin-est}
    \big\| 
    \mathbf{J}(u_1,u_2,u_3)  \big\|_{X}
    \lesssim
    \|u_1\|_{X}
    \|u_2\|_{X}
    \|u_3\|_{X}.
  \end{equation} 
Indeed, we have
  \begin{align*}
    \|\mathbf{J}(u_1,u_2,u_3) \|_{X}
    &\ls \sup_{v\in X :\|v\|_X\leq 1}
      \Big|
      \iint (-\Delta)^{\frac{\alpha-3}{2}} (u_1\overline{u_2})u_3(t) \overline{v(t)}dtdx   
      \Big|
  \end{align*}
 by  a standard duality argument, see e.g.\ \cite{HHetal-09}.
  Further, we obtain
\begin{align*}
&\Big|
      \iint (-\Delta)^{\frac{\alpha-3}{2}} (u_1\overline{u_2})u_3(t) \overline{v(t)}dtdx   
      \Big|\\
\ls{}& \|(-\Delta)^{\frac{\alpha-3}{4}} (u_1\overline{u_2})\|_{L^2}\|(-\Delta)^{\frac{\alpha-3}{4}} (u_3\overline{v})\|_{L^2}\\
\ls{}& 
    \|u_1\|_{X}
    \|u_2\|_{X}
    \|u_3\|_{X}\|v\|_X
\end{align*}
by Corollary \ref{cor:sum-bil-est}, which implies \eqref{eq:nonlin-est}.

Theorem~\ref{thm:main} now follows from the standard approach via the contraction mapping
principle. In particular, the scattering claim follows from the fact
that functions in $V^2_{\mathbf{I}}$ have a limit at $\infty$. We omit
the details.

\subsection{Proof of Theorem
  \ref{thm:local}}\label{subsec:proof-local}
Let $\Lambda, r\geq 1$ be given. Recall that
\begin{equation*}
    B_{r,\Lambda} :=\big\{\varphi\in L_{rad}^2(\R^3) : \| \varphi\|_{L^2} \leq r, \|P_{>\Lambda} \varphi
    \|_{L^2} \leq \eta r^{-1} \big\},
\end{equation*}
where the parameter $0<\eta\leq 1$ will be determined below, and define
\begin{equation*}
    D_{R,\eps} :=\big\{u\in X : \| u\|_{X} \leq R, \|P_{>\Lambda} u
    \|_{X} \leq \eps \big\},
\end{equation*}
for some $0<\eps\leq R$. We implicitely assume that all functions are
supported in $[0,T]$. Split $\mathbf{J}(u)=\mathbf{J}_1(u_{\leq \Lambda},u_{>\Lambda})+\mathbf{J}_2(u_{\leq
  \Lambda},u_{>\Lambda})$, where $\mathbf{J}_1$ is at least quadratic in
$u_{>\Lambda}$ and $\mathbf{J}_2$ is at least quadratic in $u_{\leq \Lambda}$.
For $\mathbf{J}_1$, we use \eqref{eq:nonlin-est} and obtain
\[
\|\mathbf{J}_1(u_{\leq \Lambda},u_{>\Lambda})\|_X\ls R\eps^2,\]
for all $u \in D_{R,\eps}$.
We turn to  $\mathbf{J}_2$. First, we have
\begin{align*}
\|(-\Delta)^{\frac{\alpha-3}{2}}(u_{\leq \Lambda}\overline{u_{\leq \Lambda}})
  v\|_{L^1_tL^2_x}\ls \Lambda^\alpha \|u_{\leq \Lambda}\|_{L^2_tL^2_x}^2
  \|v\|_{L^\infty_t L^2_x}\ls T \Lambda^\alpha \|u\|^2_X\|v\|_X.
\end{align*}
Second, we have
\begin{align*}
&\|(-\Delta)^{\frac{\alpha-3}{2}}(u_{\leq \Lambda}\overline{v}_{>4 \Lambda})
  u_{\leq \Lambda}\|_{L^1_tL^2_x}\ls \Lambda^{\alpha-3}\|u_{\leq
                                   \Lambda} \overline{v}_{>4
                                   \Lambda}\|_{L^1_tL^2_x} \|u_{\leq \Lambda}\|_{L^\infty_t L^\infty_x}\\
&\ls T \Lambda^{\alpha-3} \|\overline{v}_{>4
                                   \Lambda}\|_{L^\infty_t L^2_x}
\|u_{\leq \Lambda}\|_{L^\infty_t L^\infty_x}^2 \ls T \Lambda^{\alpha}\|u\|^2_X\|v\|_X,
\end{align*}
and we obtain
\begin{align*}
\|(-\Delta)^{\frac{\alpha-3}{2}}(u_{\leq \Lambda}\overline{v})
  u_{\leq \Lambda}\|_{L^1_tL^2_x}\ls T \Lambda^{\alpha}\|u\|^2_X\|v\|_X,
\end{align*}
because the contribution of $v_{\leq 4 \Lambda}$ can be treated as in the
first estimate above.
We conclude that there exists a $C\geq 1$, such that for all $\varphi\in
B_{r,\Lambda}$ and all $u \in D_{R,\eps}$ we have
\begin{align}
\|\mathbf{1}_{[0,T)}\mathbf{S}\varphi\|_X \leq C r, \;
  \|\mathbf{1}_{[0,T)}\mathbf{S}\varphi_{>\Lambda}\|_X \leq C \eta r^{-1},\\
\|\mathbf{J}_1(u_{\leq \Lambda},u_{>\Lambda})\|_X\leq C R\eps^2, \;
  \|\mathbf{J}_2(u_{\leq \Lambda},u_{>\Lambda})\|_X\leq C T \Lambda^\alpha R^3,
\end{align}
and similar estimates for differences.
By choosing $R=4Cr$, $\eps=2^{-6}C^{-2}r^{-1}$, $T=2^{-18}C^{-6}r^{-4}\Lambda^{-\alpha}$,
and $\eta=2^{-10}C^{-3}$, one checks that
  \[
  D_{R,\eps}\rightarrow D_{R,\eps}, \qquad u \mapsto \mathbf{1}_{[0,T)}(\mathbf{S}\varphi+\mathbf{J}(u))
  \]
  is a strict contraction, for given $\varphi\in
  B_{r,\Lambda}$. The contraction mapping principle implies Theorem \ref{thm:local}. 

\subsection{Proof of Theorem~\ref{thm:illposed}}\label{subsec:proof-ill}
It suffices\footnote{In the published paper and previous
  versions of the preprint we incorrectly applied the abstract ill-posedness result established by Bejenaru and
  Tao in \cite[Proposition 1]{BT-06} to deduce the discontinuity of the flow map from the computation presented in this section. Thanks go to Robert Schippa for helpful discussions about the Bejenaru-Tao argument.} to show that if $s<0$, then
  \begin{equation}\label{ineq:tri2}
    \sup_{t\in[0,T]}
    \bigg\| \int_{0}^{t}S(t-t')
    (|\cdot|^{-\alpha}*|S(t')\varphi|^{2}
    S(t')\varphi)dt'
    \bigg\|_{H^{s}(\mathbb{R}^{3})}
    \lesssim
    \|\varphi\|_{H^{s}(\mathbb{R}^{3})}^{3}   
  \end{equation}
  fails to hold for some radial data
  $\varphi\in H^{s}(\mathbb{R}^{3})$. We modify the construction for the case $\alpha=1$ from \cite[Section 3]{HL-14} and define the annulus
  $A_{\lambda}=\{ \xi\in\mathbb{R}^{3}: \lambda\le|\xi|\le 2\lambda
  \}$.
  Let $\varphi$ be the inverse Fourier transform of the characteristic
  function $\mathbf{1}_{A_\lambda}$. Clearly, $\varphi$ is radial and
  $\|\varphi\|_{H^{s}(\mathbb{R}^{3})}\sim\lambda^{s+\frac 32}$. With this choice of $\varphi$, let
\[
 \Phi(t):=\int_{0}^{t}S(t-t')
    (|\cdot|^{-\alpha}*|S(t')\varphi|^{2}
    S(t')\varphi)dt'.
\]
We compute the spatial Fourier transform
  \begin{align*}
    &\widehat{\Phi}(t,\xi)=
      \int_{0}^{t}\int_{\mathbb{R}^{3}}
      S(t-t')
      \frac{\mathcal{F}_x(|S(t')\varphi|^{2})(\eta)}{|\eta|^{3-\alpha}}
      \mathcal{F}_xS(t')\varphi(\xi-\eta)d\eta
      dt' \\
    &=\int_{0}^{t}
      \iint\limits_{\mathbb{R}^{3}\times\mathbb{R}^{3}}
      e^{-i(t-t')|\xi|^{\alpha}}
      \frac
      {e^{-it'|\eta-\sigma|^{\alpha}}\mathbf{1}_{A_\lambda}(\eta-\sigma)
      e^{it'|\sigma|^{\alpha}}\mathbf{1}_{A_\lambda}(\sigma)}
      {|\eta|^{3-\alpha}}
      e^{-it'|\xi-\eta|^{\alpha}}\mathbf{1}_{A_\lambda}(\xi-\eta)
      d\sigma d\eta
      dt' \\
    &= e^{-it|\xi|^{\alpha}}
      \iint\limits_{\mathbb{R}^{3}\times\mathbb{R}^{3}}
      \int_{0}^{t}
      e^{it' g_{\alpha}(\xi,\eta,\sigma)} dt'
      \frac
      {\mathbf{1}_{A_\lambda}(\eta)
      \mathbf{1}_{A_\lambda}(\sigma)
      \mathbf{1}_{A_\lambda}(\xi-\eta-\sigma)}
      {|\eta+\sigma|^{3-\alpha}}
      d\sigma d\eta
  \end{align*}
where  
\[
    g_{\alpha}(\xi,\eta,\sigma)
      =|\xi|^{\alpha}-|\eta|^{\alpha}+|\sigma|^{\alpha}-|\xi-\eta-\sigma|^{\alpha}.
  \]
Choose $T= \epsilon\lambda^{-\alpha} $ with
  $0<\epsilon\ll1$. In the domain of integration we have
  \[
  |t g_\alpha(\xi,\eta,\sigma)| \lesssim |t\lambda^{\alpha}|\ll 1
  \]
 and we get
  \begin{equation}\label{lowbound:intt}
    \bigg|\int_{0}^{t}
    e^{it' g_{\alpha}(\xi,\eta,\sigma)} dt'\bigg|
    \gtrsim
    \int_{0}^{t}
    \cos{(t' g_{\alpha}(\xi,\eta,\sigma))} dt'
    \gtrsim
    |t| \ \text{for} \ t<T.
  \end{equation}
  Thus, if $\xi\in A_\lambda$, we have
  \[
  |\widehat{\Phi}(t,\xi)| \gtrsim
  |t|\int_{A_{\lambda}}\int_{A_{\lambda}}
  |\eta+\sigma|^{-3+\alpha}d\eta d\sigma \gtrsim
  \epsilon\lambda^{-\alpha}\lambda^{3+\alpha}.
  \]
  From this we easily obtain
  $\| \Phi \|_{H^s(\R^3)}\gtrsim \epsilon \lambda^{s+\frac{9}{2}}$.
  In conclusion, these norm calculations and \eqref{ineq:tri2} give
  \begin{align*}
    \epsilon\lambda^{s+\frac{9}{2}}
    \lesssim
    \|\Phi(t)\|_{H^{s}}
    \lesssim
    \|\varphi\|_{H^s}^3
    \sim
    \lambda^{3s+\frac{9}{2}}.
  \end{align*}
  So if $s<0$, \eqref{ineq:tri2} fails to hold as  $\lambda\to \infty$.

\bibliographystyle{amsplain} \bibliography{fnls-refs}

\providecommand{\bysame}{\leavevmode\hbox to3em{\hrulefill}\thinspace}
\providecommand{\MR}{\relax\ifhmode\unskip\space\fi MR }
\providecommand{\MRhref}[2]{%
  \href{http://www.ams.org/mathscinet-getitem?mr=#1}{#2}
}
\providecommand{\href}[2]{#2}
\begin{thebibliography}{10}

\bibitem{BT-06}
Ioan Bejenaru and Terence Tao, \emph{Sharp well-posedness and ill-posedness
  results for a quadratic non-linear {S}chr\"odinger equation}, J. Funct. Anal.
  \textbf{233} (2006), no.~1, 228--259. \MR{2204680}

\bibitem{CHetal-13}
Yonggeun Cho, Hichem Hajaiej, Gyeongha Hwang, and Tohru Ozawa, \emph{On the
  {C}auchy problem of fractional {S}chr\"odinger equation with {H}artree type
  nonlinearity}, Funkcial. Ekvac. \textbf{56} (2013), no.~2, 193--224.
  \MR{3114821}

\bibitem{CHetal-16}
Yonggeun Cho, Gyeongha Hwang, and Tohru Ozawa, \emph{On small data scattering
  of {H}artree equations with short-range interaction}, Commun. Pure Appl.
  Anal. \textbf{15} (2016), no.~5, 1809--1823. \MR{3538883}

\bibitem{COetal-11}
Yonggeun Cho, Tohru Ozawa, and Suxia Xia, \emph{Remarks on some dispersive
  estimates}, Commun. Pure Appl. Anal. \textbf{10} (2011), no.~4, 1121--1128.
  \MR{2787438}

\bibitem{FK-00}
Damiano Foschi and Sergiu Klainerman, \emph{Bilinear space-time estimates for
  homogeneous wave equations}, Ann. Sci. \'Ecole Norm. Sup. (4) \textbf{33}
  (2000), no.~2, 211--274. \MR{1755116}

\bibitem{HHetal-09}
Martin Hadac, Sebastian Herr, and Herbert Koch, \emph{Well-posedness and
  scattering for the {KP}-{II} equation in a critical space}, Ann. Inst. H.
  Poincar\'e Anal. Non Lin\'eaire \textbf{26} (2009), no.~3, 917--941.
  \MR{2526409}

\bibitem{HHetal-10}
\bysame, \emph{Erratum to ``{W}ell-posedness and scattering for the {KP}-{II}
  equation in a critical space'' [{A}nn. {I}. {H}. {P}oincar\'e---{AN} 26 (3)
  (2009) 917--941]}, Ann. Inst. H. Poincar\'e Anal. Non Lin\'eaire \textbf{27}
  (2010), no.~3, 971--972. \MR{2629889}

\bibitem{HT-87}
Nakao Hayashi and Yoshio Tsutsumi, \emph{Scattering theory for {H}artree type
  equations}, Ann. Inst. H. Poincar\'e Phys. Th\'eor. \textbf{46} (1987),
  no.~2, 187--213. \MR{887147}

\bibitem{HL-14}
Sebastian Herr and Enno Lenzmann, \emph{The {B}oson star equation with initial
  data of low regularity}, Nonlinear Anal. \textbf{97} (2014), 125--137.
  \MR{3146377}

\bibitem{HT-15}
Sebastian Herr and Achenef Tesfahun, \emph{Small data scattering for
  semi-relativistic equations with {H}artree type nonlinearity}, J.
  Differential Equations \textbf{259} (2015), no.~10, 5510--5532. \MR{3377534}

\bibitem{KLetal-13}
Kay Kirkpatrick, Enno Lenzmann, and Gigliola Staffilani, \emph{On the continuum
  limit for discrete {NLS} with long-range lattice interactions}, Comm. Math.
  Phys. \textbf{317} (2013), no.~3, 563--591. \MR{3009717}

\bibitem{KTetal-14}
Herbert {Koch}, Daniel {Tataru}, and Monica {Vi\c{s}an}, \emph{{Dispersive
  equations and nonlinear waves. Generalized Korteweg-de Vries, nonlinear
  Schr\"odinger, wave and Schr\"odinger maps.}}, Basel: Birkh\"auser/Springer,
  2014 (English).

\bibitem{P-14}
Fabio Pusateri, \emph{Modified scattering for the boson star equation}, Commun.
  Math. Phys. \textbf{332} (2014), no.~3, 1203--1234. \MR{3262624}

\end{thebibliography}

\end{document}